\newtheorem{theorem}{Theorem}[section]
\newtheorem{prop}[theorem]{Proposition}
\theoremstyle{definition}
\newtheorem{defn}[theorem]{Definition}
\newtheorem{lemma}[theorem]{Lemma}
\newtheorem{coro}[theorem]{Corollary}
\newtheorem{prop-def}{Proposition-Definition}[section]
\newtheorem{coro-def}{Corollary-Definition}[section]
\newtheorem{remark}[theorem]{Remark}
\newtheorem{exam}[theorem]{Example}
\newcommand{\nc}{\newcommand}
\nc{\tred}[1]{\textcolor{red}{#1}}
\nc{\tblue}[1]{\textcolor{blue}{#1}}
\nc{\tgreen}[1]{\textcolor{green}{#1}}
\nc{\tpurple}[1]{\textcolor{purple}{#1}}
\nc{\btred}[1]{\textcolor{red}{\bf #1}}
\nc{\btblue}[1]{\textcolor{blue}{\bf #1}}
\nc{\btgreen}[1]{\textcolor{green}{\bf #1}}
\nc{\btpurple}[1]{\textcolor{purple}{\bf #1}}
\nc{\NN}{{\mathbb N}}
\nc{\ncsha}{{\mbox{\cyr X}^{\mathrm NC}}} \nc{\ncshao}{{\mbox{\cyr
X}^{\mathrm NC}_0}}
\newcommand{\efootnote}[1]{}
\renewcommand{\textbf}[1]{}
\newcommand{\delete}[1]{}
\nc{\mlabel}[1]{\label{#1}}  
\nc{\mcite}[1]{\cite{#1}}  
\nc{\mref}[1]{\ref{#1}}  
\nc{\mbibitem}[1]{\bibitem{#1}} 
\nc{\mlabel}[1]{\label{#1}{\hfill \hspace{1cm}{\bf{{\ }\hfill(#1)}}}}
\nc{\mcite}[1]{\cite{#1}{{\bf{{\ }(#1)}}}}  
\nc{\mref}[1]{\ref{#1}{{\bf{{\ }(#1)}}}}  
\nc{\mbibitem}[1]{\bibitem[\bf #1]{#1}} 
\nc{\opa}{\ast} \nc{\opb}{\odot} \nc{\op}{\bullet} \nc{\pa}{\frakL}
\nc{\arr}{\rightarrow} \nc{\lu}[1]{(#1)} \nc{\mult}{\mrm{mult}}
\nc{\diff}{\mathfrak{Diff}}
\nc{\opc}{\sharp}\nc{\opd}{\natural}
\nc{\ope}{\circ}
\nc{\dpt}{\mathrm{d}}
\nc{\hck}{H_{RT}}
\nc{\vdf}{\calf}
\nc{\ldf}{\calf_\ell}
\nc{\hlf}{H_\ell}
\nc{\onek}{\mathbf{1}_\bfk}
\nc{\diam}{alternating\xspace}
\nc{\Diam}{Alternating\xspace}
\nc{\cdiam}{canonical alternating\xspace}
\nc{\Cdiam}{Canonical alternating\xspace}
\nc{\AW}{\mathcal{A}}
\nc{\ari}{\mathrm{ar}}
\nc{\lef}{\mathrm{lef}}
\nc{\Sh}{\mathrm{ST}}
\nc{\Cr}{\mathrm{Cr}}
\nc{\st}{{Schr\"oder tree}\xspace}
\nc{\sts}{{Schr\"oder trees}\xspace}
\nc{\vertset}{\Omega} 
\nc{\assop}{\quad \begin{picture}(5,5)(0,0)
\line(-1,1){10}
\put(-2.2,-2.2){$\bullet$}
\line(0,-1){10}\line(1,1){10}
\end{picture} \quad \smallskip}
\nc{\operator}{\begin{picture}(5,5)(0,0)
\line(0,-1){6}
\put(-2.6,-1.8){$\bullet$}
\line(0,1){9}
\end{picture}}
\nc{\idx}{\begin{picture}(6,6)(-3,-3)
\put(0,0){\line(0,1){6}}
\put(0,0){\line(0,-1){6}}
\end{picture}}
\nc{\pb}{{\mathrm{pb}}}
\nc{\Lf}{{\mathrm{Lf}}}
\nc{\lft}{{left tree}\xspace}
\nc{\lfts}{{left trees}\xspace}
\nc{\fat}{{fundamental averaging tree}\xspace}
\nc{\fats}{{fundamental averaging trees}\xspace}
\nc{\avt}{\mathrm{Avt}}
\nc{\rass}{{\mathit{RAss}}}
\nc{\aass}{{\mathit{AAss}}}
\nc{\vin}{{\mathrm Vin}}    
\nc{\lin}{{\mathrm Lin}}    
\nc{\inv}{\mathrm{I}n}
\nc{\gensp}{V} 
\nc{\genbas}{\mathcal{V}} 
\nc{\bvp}{V_P}     
\nc{\gop}{{\,\omega\,}}     
\nc{\bin}[2]{ (_{\stackrel{\scs{#1}}{\scs{#2}}})}  
\nc{\binc}[2]{ \left (\!\! \begin{array}{c} \scs{#1}\\
    \scs{#2} \end{array}\!\! \right )}  
\nc{\bincc}[2]{  \left ( {\scs{#1} \atop
    \vspace{-1cm}\scs{#2}} \right )}  
\nc{\bs}{\bar{S}} \nc{\cosum}{\sqsubset} \nc{\la}{\longrightarrow}
\nc{\rar}{\rightarrow} \nc{\dar}{\downarrow} \nc{\dprod}{**}
\nc{\dap}[1]{\downarrow \rlap{$\scriptstyle{#1}$}}
\nc{\md}{\mathrm{dth}} \nc{\uap}[1]{\uparrow
\rlap{$\scriptstyle{#1}$}} \nc{\defeq}{\stackrel{\rm def}{=}}
\nc{\disp}[1]{\displaystyle{#1}} \nc{\dotcup}{\
\displaystyle{\bigcup^\bullet}\ } \nc{\gzeta}{\bar{\zeta}}
\nc{\hcm}{\ \hat{,}\ } \nc{\hts}{\hat{\otimes}}
\nc{\barot}{{\otimes}} \nc{\free}[1]{\bar{#1}}
\nc{\uni}[1]{\tilde{#1}} \nc{\hcirc}{\hat{\circ}} \nc{\lleft}{[}
\nc{\lright}{]} \nc{\lc}{\lfloor} \nc{\rc}{\rfloor}
\nc{\curlyl}{\left \{ \begin{array}{c} {} \\ {} \end{array}
    \right .  \!\!\!\!\!\!\!}
\nc{\curlyr}{ \!\!\!\!\!\!\!
    \left . \begin{array}{c} {} \\ {} \end{array}
    \right \} }
\nc{\longmid}{\left | \begin{array}{c} {} \\ {} \end{array}
    \right . \!\!\!\!\!\!\!}
\nc{\onetree}{\bullet} \nc{\ora}[1]{\stackrel{#1}{\rar}}
\nc{\ola}[1]{\stackrel{#1}{\la}}
\nc{\ot}{\otimes} \nc{\mot}{{{\boxtimes\,}}}
\nc{\otm}{\overline{\boxtimes}} \nc{\sprod}{\bullet}
\nc{\scs}[1]{\scriptstyle{#1}} \nc{\mrm}[1]{{\rm #1}}
\nc{\margin}[1]{\marginpar{\rm #1}}   
\nc{\dirlim}{\displaystyle{\lim_{\longrightarrow}}\,}
\nc{\invlim}{\displaystyle{\lim_{\longleftarrow}}\,}
\nc{\mvp}{\vspace{0.3cm}} \nc{\tk}{^{(k)}} \nc{\tp}{^\prime}
\nc{\ttp}{^{\prime\prime}} \nc{\svp}{\vspace{2cm}}
\nc{\vp}{\vspace{8cm}} \nc{\proofbegin}{\noindent{\bf Proof: }}
\nc{\proofend}{$\blacksquare$ \vspace{0.3cm}}
\nc{\modg}[1]{\!<\!\!{#1}\!\!>}
\nc{\intg}[1]{F_C(#1)} \nc{\lmodg}{\!
<\!\!} \nc{\rmodg}{\!\!>\!}
\nc{\cpi}{\widehat{\Pi}}
\nc{\sha}{{\mbox{\cyr X}}}  
\nc{\shap}{{\mbox{\cyrs X}}} 
\nc{\shpr}{\diamond}    
\nc{\shp}{\ast} \nc{\shplus}{\shpr^+}
\nc{\shprc}{\shpr_c}    
\nc{\msh}{\ast} \nc{\zprod}{m_0} \nc{\oprod}{m_1}
\nc{\vep}{\epsilon} \nc{\labs}{\mid\!} \nc{\rabs}{\!\mid}
\nc{\sqmon}[1]{\langle #1\rangle}
\nc{\mmbox}[1]{\mbox{\ #1\ }} \nc{\dep}{\mrm{dep}} \nc{\fp}{\mrm{FP}}
\nc{\rchar}{\mrm{char}} \nc{\End}{\mrm{End}} \nc{\Fil}{\mrm{Fil}}
\nc{\Mor}{Mor\xspace} \nc{\gmzvs}{gMZV\xspace}
\nc{\gmzv}{gMZV\xspace} \nc{\mzv}{MZV\xspace}
\nc{\mzvs}{MZVs\xspace} \nc{\Hom}{\mrm{Hom}} \nc{\id}{\mrm{id}}
\nc{\im}{\mrm{im}} \nc{\incl}{\mrm{incl}} \nc{\map}{\mrm{Map}}
\nc{\mchar}{\rm char} \nc{\nz}{\rm NZ} \nc{\supp}{\mathrm Supp}
\nc{\Alg}{\mathbf{Alg}} \nc{\Bax}{\mathbf{Bax}} \nc{\bff}{\mathbf f}
\nc{\bfk}{{\bf k}} \nc{\bfone}{{\bf 1}} \nc{\bfx}{\mathbf x}
\nc{\bfy}{\mathbf y}
\nc{\base}[1]{\bfone^{\otimes ({#1}+1)}} 
\nc{\Cat}{\mathbf{Cat}}
\nc{\detail}{\marginpar{\bf More detail}
    \noindent{\bf Need more detail!}
    \svp}
\nc{\Int}{\mathbf{Int}} \nc{\Mon}{\mathbf{Mon}}
\nc{\rbtm}{{shuffle }} \nc{\rbto}{{Rota-Baxter }}
\nc{\remarks}{\noindent{\bf Remarks: }} \nc{\Rings}{\mathbf{Rings}}
\nc{\Sets}{\mathbf{Sets}} \nc{\wtot}{\widetilde{\odot}}
\nc{\wast}{\widetilde{\ast}} \nc{\bodot}{\bar{\odot}}
\nc{\bast}{\bar{\ast}} \nc{\hodot}[1]{\odot^{#1}}
\nc{\hast}[1]{\ast^{#1}} \nc{\mal}{\mathcal{O}}
\nc{\tet}{\tilde{\ast}} \nc{\teot}{\tilde{\odot}}
\nc{\oex}{\overline{x}} \nc{\oey}{\overline{y}}
\nc{\oez}{\overline{z}} \nc{\oef}{\overline{f}}
\nc{\oea}{\overline{a}} \nc{\oeb}{\overline{b}}
\nc{\weast}[1]{\widetilde{\ast}^{#1}}
\nc{\weodot}[1]{\widetilde{\odot}^{#1}} \nc{\hstar}[1]{\star^{#1}}
\nc{\lae}{\langle} \nc{\rae}{\rangle}
\nc{\lf}{\lfloor}
\nc{\rf}{\rfloor}
\nc{\QQ}{{\mathbb Q}}
\nc{\RR}{{\mathbb R}} \nc{\ZZ}{{\mathbb Z}}
\nc{\cala}{{\mathcal A}} \nc{\calb}{{\mathcal B}}
\nc{\calc}{{\mathcal C}}
\nc{\cald}{{\mathcal D}} \nc{\cale}{{\mathcal E}}
\nc{\calf}{{\mathcal F}} \nc{\calg}{{\mathcal G}}
\nc{\calh}{{\mathcal H}} \nc{\cali}{{\mathcal I}}
\nc{\call}{{\mathcal L}} \nc{\calm}{{\mathcal M}}
\nc{\caln}{{\mathcal N}} \nc{\calo}{{\mathcal O}}
\nc{\calp}{{\mathcal P}} \nc{\calr}{{\mathcal R}}
\nc{\cals}{{\mathcal S}} \nc{\calt}{{\mathcal T}}
\nc{\calu}{{\mathcal U}} \nc{\calw}{{\mathcal W}} \nc{\calk}{{\mathcal K}}
\nc{\calx}{{\mathcal X}} \nc{\CA}{\mathcal{A}}
\nc{\fraka}{{\mathfrak a}} \nc{\frakA}{{\mathfrak A}}
\nc{\frakb}{{\mathfrak b}} \nc{\frakB}{{\mathfrak B}}
\nc{\frakD}{{\mathfrak D}} \nc{\frakF}{\mathfrak{F}}
\nc{\frakf}{{\mathfrak f}} \nc{\frakg}{{\mathfrak g}}
\nc{\frakH}{{\mathfrak H}} \nc{\frakL}{{\mathfrak L}}
\nc{\frakM}{{\mathfrak M}} \nc{\bfrakM}{\overline{\frakM}}
\nc{\frakm}{{\mathfrak m}} \nc{\frakP}{{\mathfrak P}}
\nc{\frakN}{{\mathfrak N}} \nc{\frakp}{{\mathfrak p}}
\nc{\frakS}{{\mathfrak S}} \nc{\frakT}{\mathfrak{T}}
\nc{\frakX}{{\mathfrak X}}
\nc{\BS}{\mathbb{S
}}
\font\cyr=wncyr10 \font\cyrs=wncyr7
\nc{\li}[1]{\textcolor{red}{Li:#1}}
\nc{\yi}[1]{\textcolor{blue}{Yi: #1}}
\nc{\xing}[1]{\textcolor{purple}{Xing:#1}}
\nc{\revise}[1]{\textcolor{red}{#1}}
\nc{\ID}{{\rm I}}\nc{\lbar}[1]{\overline{#1}}\nc{\bre}{{\rm bre}}
\nc{\sd}{\cals}\nc{\rb}{\rm RB}\nc{\A}{\rm A}\nc{\LL}{\rm L}\nc{\tx}{\tilde{X}}
\nc{\col}{\Delta_{L}}\nc{\mul}{m_{\mathrm{RT}}}\nc{\ul}{u_{RT}}\nc{\epl}{\epsilon_{RT}}
\nc{\hl}{H_{RT}}\nc{\arro}[1]{#1}\nc{\px}{P_{\tx}}\nc{\pw}{P_{\mathfrak{w}}}\nc{\pl}{B^+}
\nc{\pp}{\pl}\nc{\ppp}[1]{B^+(#1)}\nc{\dw}{\diamond}\nc{\dl}{\diamond}
\nc{\ncshaw}{\sha^{{\rm NC}}_{\mathfrak{w}}}\nc{\ncshal}{\sha^{{\rm NC}}_{{\rm \ell}}}
\nc{\ver}{\rm V}\nc{\ld}{l}\nc{\del}{\Delta_{{\rm \ell}}}\nc{\epsl}{\epsilon_{{\rm \ell}}}
\nc{\uul}{u_{{\rm \ell}}}\nc{\oneh}{\mathbf{1}}\nc{\onew}{\mathbf{1}}
\nc{\etree}{1} \nc{\conc}{m_{RT}} \nc{\subq}{\bfk Q_l} \nc{\fid}{\unlhd}  \nc{\sfid}{\lhd}
\nc{\lhl}{\leq_{h,l}} \nc{\ghl}{\geq_{hl}}
\nc{\hrtb}{H_{RT}(X\sqcup\Omega)} \nc{\hrts}{H_{RT}(X, \Omega)}\nc{\rts}{\mathcal{T}(X, \Omega)}\nc{\rfs}{\mathcal{F}(X, \Omega)} \nc{\fm}{m_{FM}} \nc{\coll}{\Delta_\lambda}
\begin{document}

\title[Weighted infinitesimal unitary bialgebras, pre-Lie and matrix algebras]{Weighted infinitesimal unitary bialgebras on matrix algebras and weighted associative Yang-Baxter equations}
%
\author{Yi Zhang}
\address{Department of Mathematics, Lanzhou University, Lanzhou, Gansu 730000, P.\,R. China}
         \email{zhangy2016@lzu.edu.cn}

\author{Xing Gao$^{*}$}\footnotetext{* Corresponding author.}
\address{School of Mathematics and Statistics, Key Laboratory of Applied Mathematics and Complex Systems, Lanzhou University, Lanzhou, Gansu 730000, P.\,R. China}
         \email{gaoxing@lzu.edu.cn}


\author{Jia-Wen Zheng}
\address{Department of Mathematics, Lanzhou University, Lanzhou, Gansu 730000, P.\,R. China}
         \email{zhengjw16@lzu.edu.cn}

\date{\today}
\begin{abstract}
We equip a matrix algebra with a weighted infinitesimal unitary bialgebraic structure, via a construction of a suitable coproduct. Furthermore, an infinitesimal unitary Hopf algebra, under the view of Aguiar, is constructed on a matrix algebra.
By exploring the relationship between weighted infinitesimal bialgebras and pre-Lie algebras, we construct a pre-Lie algebraic structure and then a new Lie algebraic structure on a matrix algebra.
We also introduce the weighted associative Yang-Baxter equations (AYBEs) and obtain the relationship between solutions of  weighted AYBEs and weighted infinitesimal unitary bialgebras.
We give a bijection between the solutions of the associative Yang-Baxter equation of weight $\lambda$  and Rota-Baxter operators of weight $-\lambda$ on matrix algebras.
As a consequence, weighted quasitriangular infinitesimal unitary bialgebras are constructed, which generalize the results studied by Aguiar. Finally, We show that any weighted quasitriangular infinitesimal unitary bialgebra can be made into a dendriform algebra.
\end{abstract}

\subjclass[2010]{
15A30, 
16W99, 
16T10, 
17B60, 
17D25  	
}

\keywords{matrix algebra; infinitesimal bialgebra; pre-Lie algebra; associative Yang-Baxter equation}

\maketitle

\tableofcontents

\setcounter{section}{0}

\allowdisplaybreaks

\section{Introduction}
The interaction between studies in pure mathematics and mathematical
physics has long been a rich source of inspirations that benefited both
fields. This paper arose from an attempt to connect these two fields by establishing connections among weighted infinitesimal unitary bialgebras, weighted associative Yang-Baxter equations, pre-Lie algebras, Rota-Baxter algebras and dendriform algebras.

Infinitesimal bialgebras, introduced by Joni and Rota~\mcite{JR}, are in order to give an
algebraic framework for the calculus of Newton divided differences.
Namely, an infinitesimal bialgebra is a module $A$ which is simultaneously an algebra (possibly without a unit) and a coalgebra (possibly without a counit) such that the coproduct $\Delta$ is a derivation of $A$ in the sense:
\vskip-0.15in
\begin{equation*}
\Delta(ab)=a\cdot\Delta(b)+\Delta(a)\cdot b\,\text{ for } a, b\in A.
\end{equation*}
If an infinitesimal bialgebra has an antipode $S$,  then it will be called an infinitesimal Hopf algebra~\mcite{MA}.
The basic theory of infinitesimal bialgebras and infinitesimal Hopf algebras was developed by Aguiar~\mcite{MA, Agu01, Agu02, Aguu02}, which has proven useful not only in combinatorices~\mcite{Agu02}, but in other areas of mathematics as well, such as associative Yang-Baxter equations, Drinfeld's doubles and pre-Lie algebras~\mcite{MA}.
Recently,  Wang~\mcite{WW14} generalized  Aguiar's result by developing the Drinfeld's double for braided infinitesimal Hopf algebras in Yetter-Drinfeld categories.

We emphasize that another version of infinitesimal bialgebras and infinitesimal Hopf algebras was defined by Loday and Ronco~\mcite{LR06}
and brought new life on rooted trees by Foissy~\mcite{Foi09, Foi10} in the sense that
$$\Delta(ab)=a\cdot\Delta(b)+\Delta(a)\cdot b-a\ot b\,\text{ for } a, b\in A.$$
In~\mcite{GZ}, the authors combined the two versions of infinitesimal bialgebras by defining the coproduct of $A$ to be the following compatibility:
$$\Delta(ab)=a\cdot\Delta(b)+\Delta(a)\cdot b+\lambda (a\ot b)\,\text{ for } a, b\in A,$$
where $\lambda\in \bfk$ is a fixed constant. This leads to the born of weighted infinitesimal (unitary) bialgebras, that is, the infinitesimal (unitary) bialgebras of weight $\lambda$. See Definition~\mref{def:iub} below.

In the present paper, we equip a matrix algebra with a weighted infinitesimal unitary bialgebraic structure by a construction of a suitable coproduct. Moreover, we also equip an infinitesimal unitary bialgebra of weight zero on matrix algebras with an antipode such that it is further an infinitesimal unitary Hopf algebra, under the view of Aguiar~\mcite{MA}.

Pre-Lie algebras, also called Vinberg algebras, first appeared in the work of Vinberg~\mcite{Vin63} under the name left-symmetric algebras on convex homogeneous cones and also appeared independently at the same time in the study of affine structures on
manifolds~\mcite{Ger63}. Its study has a broad applications in mathematices and mathematical physics, such as classical and quantum Yang-Baxter equations~\mcite{Bai07, Bai08, Bor90, ES99, GS00}, pre-Poisson algebras~\mcite{Agu00} and Poisson brackets~\mcite{BN85}, quantum field theory~\mcite{CK98, CK00, GPZ1, Kre98, Kre03, TK13} and operads~\mcite{CL01, PBG17}, Lie group and Lie algebras~\mcite{ Kim86, Med81, Vin63}, $\mathcal{O}$-operators~\mcite{Bai, Bai07, BGN12} and Rota-Baxter algebras~\mcite{AB08, BBGN13, EGP07, EGK05, GG, GGR}. In~\mcite{Bai}, Bai pointed out that ``Due to the nonassociativity of pre-Lie algebra, there is not a suitable (and computable) representation theory
and not a complete (and good) structure theory of pre-Lie algebras". It is nature to consider how to construct them from some algebraic structures (especially associative algebras) which we have known. Our results may give a new and elementary method to construct pre-Lie algebraic structures on some associative algebras, especially on matrix algebras.

It should be pointed out that Bai~\mcite{Bai} gave two approaches to construct pre-Lie algebras from associative algebras, see~\cite[Corollary~3.15]{Bai} and ~\cite[Proposition~3.16]{Bai} for more details.
Our consideration of pre-Lie algebras on matrix algebras has motivations beyond a simple pursuit of weighted infinitesimal unitary bialgebras on matrix algebras. In the algebraic framework of Aguiar~\mcite{Aguu02} for infinitesimal bialgebra , a pre-Lie algebraic structure is constructed from an arbitrary infinitesimal bialgebra. Motivated by Aguiar's construction, we previously derive a pre-Lie algebra from an arbitrary weighted infinitesimal bialgebra. As applications, two pre-Lie algebras on matrix algebras are built in the present paper.

Due to the construction of  an infinitesimal unitary bialgebra of weight $\lambda$ arising from a matrix algebra in this paper,
there is a close relationship among the  matrix algebras, pre-Lie algebras, Lie algebras and weighted infinitesimal unitary bialgebras. This situation can be summarized in the sense of following commutative diagram of categories
\begin{align*}
\xymatrix@C0.8em{
\text{\small{Matrix algebras} }\ar[d]_{} \ar[r]^{} &\text{\small{Weighted infinitesimal (unitary) bialgebras} } \ar[d]^{} \\
\text{\small{Lie algebras}} &  \ar[l]^{}  \text{\small{Pre-Lie algebras} }}
\end{align*}

Let $\mathfrak{g}$ be a Lie algebra and $r\in \mathfrak{g} \ot \mathfrak{g}$. A well-known result about classical Yang-Baxter equation (CYBE) studied by Drinfeld~\mcite{Dri86} is that the principal derivation $\delta_r: \mathfrak{g} \rightarrow \mathfrak{g} \ot \mathfrak{g}$ is coassociative if and only if the element
\begin{align*}
[r_{12}, r_{13}]+[r_{12}, r_{23}]+[r_{13}, r_{23}]\in \mathfrak{g} \ot \mathfrak{g} \ot \mathfrak{g}
\end{align*}
is $\mathfrak{g}$-invariant. The solutions of classical Yang-Baxter equation give rise to Lie bialgebras and quantum groups~\mcite{Dri86}. Parallel to classical Yang-Baxter equation, Aguiar~\mcite{MA} introduced associative Yang-Baxter equation (AYBE)
\begin{align*}
r_{13}r_{12}-r_{12}r_{23}+r_{23}r_{13}=0.
\end{align*}
Aguiar~\mcite{Agu01} shown that any solution $r$ of AYBE in an algebra $A$ is a solution of CYBE in $A^{lie}$ provided that $r+\tau(r)$ is $A$-invariant. Here $A^{lie}$ denotes the Lie algebra obtained by endowing $A$ with the commutator bracket and $\tau$ is the switch map. In particular, any skew-symmetric solution of AYBE is a skew-symmetric solution of CYBE in $A^{lie}$.

Let $A$ be a unitary algebra. For each solution $r\in A\ot A$, the principle derivation
\begin{align*}
\Delta_{r}: A\rightarrow A\ot A, \quad a\mapsto a\cdot r-r\cdot a
\end{align*}
endows $A$ with an infinitesimal unitary bialgebra of weight zero.
In this paper, we generalize Aguiar's result by the following result.

\begin{theorem}\rm(={\bf Theorem}$\mathrm{~\ref{thm:iff}}$\rm)
Let $A$ be a unitary algebra and $r=\sum_i u_i \ot v_i\in A\ot A$. Then the weighted principle derivation
$\Delta_{r}(a)=a\cdot r-r\cdot a-\lambda (a\ot 1)$
is coassociative if and only if the element
$r_{13}r_{12}-r_{12}r_{23}+r_{23}r_{13}-\lambda r_{13}\in A\ot A\ot A$
is $A$-invariant.
\end{theorem}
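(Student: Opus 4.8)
The plan is to verify coassociativity by a single direct expansion and to show that the coassociativity defect of $\Delta_r$ coincides, term by term, with the $A$-invariance defect of the element $P:=r_{13}r_{12}-r_{12}r_{23}+r_{23}r_{13}-\lambda r_{13}$. Throughout I write $r=\sum_i u_i\ot v_i$, so that $a\cdot r=\sum_i au_i\ot v_i$, $r\cdot a=\sum_i u_i\ot v_ia$, and $r_{12}=\sum_i u_i\ot v_i\ot 1$, $r_{13}=\sum_i u_i\ot 1\ot v_i$, $r_{23}=\sum_i 1\ot u_i\ot v_i$, with all products taken componentwise in $A\ot A\ot A$. Here $A$-invariance of $P=\sum P^{(1)}\ot P^{(2)}\ot P^{(3)}$ means $(a\ot 1\ot 1)P=P(1\ot 1\ot a)$ for every $a\in A$, the natural three-slot analogue of the invariance condition recalled before the statement.

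First I would compute the two iterated coproducts. Applying $\Delta_r\ot\id$ to each of the three summands of $\Delta_r(a)=\sum_i au_i\ot v_i-\sum_i u_i\ot v_ia-\lambda(a\ot 1)$ produces nine terms, and likewise $(\id\ot\Delta_r)\Delta_r(a)$ produces nine terms. The only nonroutine input is the value $\Delta_r(1)=1\cdot r-r\cdot 1-\lambda(1\ot 1)=-\lambda(1\ot 1)$, which governs the contribution of the weighted summand $-\lambda(a\ot 1)$ to the second iterated coproduct.

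Next I would form the difference $(\Delta_r\ot\id)\Delta_r(a)-(\id\ot\Delta_r)\Delta_r(a)$ and simplify. The purely weighted terms carrying a $1$ in the last tensor slot cancel in pairs between the two expansions (including the two $\lambda^2$-terms arising from $\Delta_r(1)$), and one further pair of terms cancels after the relabeling $i\leftrightarrow j$. What survives organizes into four groups corresponding to the four summands of $P$: the $r_{13}r_{12}$- and $-\lambda r_{13}$-terms match directly, while several of the $-r_{12}r_{23}$- and $r_{23}r_{13}$-terms line up only after the relabeling $i\leftrightarrow j$. The upshot is the identity
$$(\Delta_r\ot\id)\Delta_r(a)-(\id\ot\Delta_r)\Delta_r(a)=(a\ot 1\ot 1)P-P(1\ot 1\ot a).$$

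The conclusion is then immediate: the left-hand side vanishes for all $a\in A$ exactly when $\Delta_r$ is coassociative, while the right-hand side vanishes for all $a\in A$ exactly when $P$ is $A$-invariant. I expect the only real obstacle to be bookkeeping: keeping the eighteen terms straight and applying the symmetric relabeling $i\leftrightarrow j$ in precisely the right places so that the surviving terms line up with $(a\ot 1\ot 1)P-P(1\ot 1\ot a)$. The weight-$\lambda$ novelty beyond Aguiar's original (weight-zero) computation enters only through the extra summand $-\lambda(a\ot 1)$ in $\Delta_r$ and the corresponding $-\lambda r_{13}$ in $P$, and the cancellations above show that these two modifications are exactly compatible.
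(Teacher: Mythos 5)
Your proposal is correct and follows essentially the same route as the paper's own proof: both expand $(\Delta_r\ot\id)\Delta_r(a)$ and $(\id\ot\Delta_r)\Delta_r(a)$ directly (using $\Delta_r(1)=-\lambda(1\ot 1)$ for the weighted summand), cancel the common terms $-r_{12}(1\ot a\ot 1)r_{23}$, $-\lambda\, a\cdot r_{12}$, $\lambda\, r_{12}(1\ot a\ot 1)$ and $\lambda^2(a\ot 1\ot 1)$ after the relabeling $i\leftrightarrow j$, and identify the remainder with $a\cdot\bigl(r_{13}r_{12}-r_{12}r_{23}+r_{23}r_{13}-\lambda r_{13}\bigr)-\bigl(r_{13}r_{12}-r_{12}r_{23}+r_{23}r_{13}-\lambda r_{13}\bigr)\cdot a$, which is exactly your displayed identity. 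The only difference is presentational (you package the comparison as one difference identity rather than comparing the two expansions side by side), plus a harmless misattribution: only one of the two $\lambda^2$-terms comes from $\Delta_r(1)$, the other from the weighted summand of $\Delta_r(a)$ under $\Delta_r\ot\id$.
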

\noindent Then we call the equation
\begin{align*}
r_{13}r_{12}-r_{12}r_{23}+r_{23}r_{13}=\lambda r_{13}
\end{align*}
an associative Yang-Baxter equation of weight $\lambda$. Let us emphasize that the weighted AYBEs considered here are very general, which include the AYBEs~\mcite{MA},
the modified AYBEs~\mcite{EF02} and the non-homogeneous AYBEs  in~\mcite{BGN12}. See Remark~\mref{rem:3ex} below.
 Moreover, a surprising phenomenon shows that a solution of a weighted AYBE induces a Rota-Baxter operator of weight $-\lambda$ (Theorem~\mref{thm:RB2}). As a consequence, we give a bijection between the set of the solutions of weighted AYBE and the set of Rota-Baxter operators on matrix algebras.
\begin{theorem}\rm(={\bf Theorem}$\mathrm{~\ref{thm:bijection}}$\rm)
Let $r$ be a solution of an  AYBE of weight $\lambda$ in $M_{n}(\bfk)$. Then the map $r\rightarrow P_r$ is a bijection between the set of the solutions of AYBE of weight $\lambda$ in $M_{n}(\bfk)$ and the set of Rota-Baxter operators of weight $-\lambda$ on $M_{n}(\bfk)$ .
\end{theorem}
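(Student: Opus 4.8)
The plan is to realise the correspondence $r\mapsto P_r$ as the restriction of a single linear isomorphism and then to show that this isomorphism carries the AYBE locus exactly onto the Rota-Baxter locus. For $r=\sum_i u_i\ot v_i$ the operator of Theorem~\mref{thm:RB2} is $P_r(a)=\sum_i u_i a v_i$, so that $r\mapsto P_r$ is the vector-space map $\Psi\colon M_n(\bfk)\ot M_n(\bfk)\to\End(M_n(\bfk))$ underlying the algebra isomorphism $M_n(\bfk)\ot M_n(\bfk)^{\mathrm{op}}\cong\End(M_n(\bfk))$. First I would record that $\Psi$ is a bijection; the quickest route is a matrix-unit computation, since $E_{ij}\ot E_{kl}$ is sent to the operator $E_{jk}\mapsto E_{il}$ (and $0$ on the remaining units), and these images run over all matrix units of $\End(M_n(\bfk))$ as $(i,j,k,l)$ varies. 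By Theorem~\mref{thm:RB2}, $\Psi$ already sends each solution of the AYBE of weight $\lambda$ to a Rota-Baxter operator of weight $-\lambda$, so $r\mapsto P_r$ is a well-defined injection from the solution set into the Rota-Baxter set, injectivity being inherited from $\Psi$.

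The real content is surjectivity, for which I would prove the converse of Theorem~\mref{thm:RB2} on $M_n(\bfk)$: if $P_r$ is Rota-Baxter of weight $-\lambda$, then $r$ solves the AYBE of weight $\lambda$. The device is a second contraction map $\Phi\colon M_n(\bfk)^{\ot 3}\to\Hom(M_n(\bfk)\ot M_n(\bfk),M_n(\bfk))$ sending $x\ot y\ot z$ to the bilinear map $(a,b)\mapsto x a y b z$. Writing $C(r):=r_{13}r_{12}-r_{12}r_{23}+r_{23}r_{13}-\lambda r_{13}$, a direct expansion of both sides shows that the Rota-Baxter identity $P_r(a)P_r(b)=P_r\big(P_r(a)b+aP_r(b)-\lambda ab\big)$ is, term by term and after relabelling the summation indices in the mixed term, precisely the statement $\Phi(C(r))(a,b)=0$. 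Hence $P_r$ is Rota-Baxter of weight $-\lambda$ if and only if $\Phi(C(r))=0$.

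Everything then reduces to showing $\Phi$ is injective on $M_n(\bfk)$, which I expect to be the main obstacle and the one place where the matrix structure is genuinely used. A dimension count gives $\dim M_n(\bfk)^{\ot 3}=n^6=\dim\Hom(M_n(\bfk)\ot M_n(\bfk),M_n(\bfk))$, so it suffices to show $\Phi$ sends a basis to a basis; the computation $\Phi(E_{ij}\ot E_{kl}\ot E_{pq})(E_{ab}\ot E_{cd})=\delta_{ja}\delta_{bk}\delta_{lc}\delta_{dp}E_{iq}$ exhibits each $\Phi(E_{ij}\ot E_{kl}\ot E_{pq})$ as a distinct elementary map, so the images are linearly independent and $\Phi$ is injective. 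Granting this, $\Phi(C(r))=0$ forces $C(r)=0$, i.e. $r$ is a solution of the AYBE of weight $\lambda$; combined with the bijectivity of $\Psi$ this shows every Rota-Baxter operator of weight $-\lambda$ equals $P_r$ for a unique solution $r$, completing the bijection. I would emphasise that the injectivity of $\Phi$ is exactly what can fail for a general unitary algebra, which is why the full two-sided correspondence is stated only for matrix algebras while Theorem~\mref{thm:RB2} provides just one direction in general.
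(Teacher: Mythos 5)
Your proposal is correct, and it takes a genuinely different route from the paper's proof. The paper argues entirely in coordinates: it writes a general operator as $P(E_{pq})=\sum_{i,l}t_{ip}^{ql}E_{il}$, expands the Rota--Baxter identity on matrix units into a system of equations in the structure constants $t_{ij}^{kl}$, expands the weighted AYBE for $r=\sum_{i,j,k,l}t_{ij}^{kl}E_{ij}\ot E_{kl}$ into a second system, and verifies that the two systems coincide up to a permutation of indices, with a case split according to whether $E_{ab}E_{cd}$ vanishes. You instead factor the statement through two structural facts: the contraction $\Psi\colon M_n(\bfk)\ot M_n(\bfk)\to \End(M_n(\bfk))$, $u\ot v\mapsto(a\mapsto uav)$, maps the basis $\{E_{ij}\ot E_{kl}\}$ onto a basis of elementary operators and hence is a linear isomorphism, so \emph{every} operator is $P_r$ for a unique $r$; and the trilinear contraction $\Phi$ converts $C(r)=r_{13}r_{12}-r_{12}r_{23}+r_{23}r_{13}-\lambda r_{13}$ exactly into the Rota--Baxter defect $P_r(P_r(a)b)-P_r(a)P_r(b)+P_r(aP_r(b))-\lambda P_r(ab)$ --- this identification is the same computation the paper performs inside Theorem~\mref{thm:RB2}, where your $\Phi(x\ot y\ot z)$ appears as the map $h$ --- so injectivity of $\Phi$ gives the converse of Theorem~\mref{thm:RB2} on $M_n(\bfk)$, which is the surjectivity the paper obtains by index-matching. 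Your approach buys several things: there is no case split (the paper's Case~2, where it somewhat awkwardly restricts to weight zero because $E_{ab}E_{cd}=0$, disappears, since the $\lambda$-term of $\Phi(C(r))$ vanishes automatically on such pairs); it isolates precisely where the matrix structure is used, namely the faithfulness of $\Phi$, which can indeed fail in a general unitary algebra; and it proves cleanly that $P_r$ is Rota--Baxter of weight $-\lambda$ \emph{if and only if} $C(r)=0$. One cosmetic caveat: the paper's standing hypothesis is that $\bfk$ is a commutative ring, not a field, so you should replace the dimension count by the observation you in fact already made --- $\Phi$ (like $\Psi$) sends the standard basis of the free module $M_n(\bfk)^{\ot 3}$ bijectively onto the standard basis of $\Hom(M_n(\bfk)^{\ot 2},M_n(\bfk))$, as your computation $\Phi(E_{ij}\ot E_{kl}\ot E_{pq})(E_{ab}\ot E_{cd})=\delta_{ja}\delta_{bk}\delta_{lc}\delta_{dp}E_{iq}$ shows --- which yields injectivity (and bijectivity) over any commutative $\bfk$ without counting dimensions. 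What the paper's coordinate proof buys in exchange is the explicit systems of equations in the $t_{ij}^{kl}$, which are what one actually solves in classification computations such as the example following Theorem~\mref{thm:bijection}.
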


The solutions of an AYBE of weight $\lambda$ give rise to an infinitesimal unitary bialgebra of weight $\lambda$ and we call this infinitesimal unitary bialgebra the weighted quasitriangular infinitesimal unitary bialgebra.
As an application, we can derive a dendriform algebra from a weighted quasitriangular infinitesimal unitary bialgebra and obtain a commutative diagram:
\begin{align*}
\xymatrix@C0.8em{
 \text{\small{Weighted quasitriangular $\epsilon$-unitary bialgebras} }\ar[d]_{} \ar[r]^{} & \text{\small{Weighted  $\epsilon$-unitary bialgebras}}  \ar[rd]_{}\\
\text{\small{Rota-Baxter algebras} } \ar[r]^{}&\text{\small{Dendriform algebras}}\ar[r]^{}& \text{\small{Pre-Lie algebras} }}
\end{align*}
{\bf Structure of the Paper.} In Section~\mref{sec:infbi}, we start by recalling the concept of an infinitesimal (unitary) bialgebra of weight $\lambda$ (Definition~\mref{def:iub}). Then we proceed to construct two different coproducts $\col$ and $\coll$ on matrix algebra $M_n(\bfk)$ to equip them with two infinitesimal unitary bialgebraic structrues (Theorems~\mref{thm:fma} and ~\mref{thm:fma1}).
At the end of this section, we equip an infinitesimal unitary Hopf algebra on matrix algebras (Theorem~\mref{thm:rt13}), under the view of Aguiar~\mcite{MA}.

In Section~\mref{sec:prelie},
by investigating the relationship between weighted infinitesimal bialgebras and pre-Lie algebras (Theorem~\mref{thm:preL}), we equip $M_{n}(\bfk)$ with two pre-Lie bialgebraic structures and two Lie algebraic structures (Theorems~\mref{thm:preope11} and~\mref{thm:preope12}), beyond a construction of coproducts. It should be pointed out that the new Lie bracket on $M_n({\bfk})$ induced by $\col$ is different from the classical one (Example~\mref{exam:dif}), and the Lie bracket derived from $\coll$ is precisely the classical Lie bracket on matrix algebra when $\lambda =1$ (Remark~\mref{rem:liesam}).

In Section~\mref{sec:wei}, we introduce the concept of a weighted associative Yang-Baxter equation (Definition~\mref{def:WAYBE}), which generalizes the concept of associative Yang-Baxter equation studied in~\cite[Section~5]{MA}. We propose the concept of a weighted principle derivation $\Delta_r$ and characterize $\Delta_r$ to be coassociative.
We show that if $r$ is a solution of a weighted AYBE in $A$, then the quadruple  $(A, m, 1, \Delta_r)$ is an $\epsilon$-unitary bialgebra of weight $\lambda$ (Theorem~\mref{thm:mainiff}). As an application,  a solution of a homogeneous associative Yang-Baxter equation for matrix algebras is also given (Theorem~\mref{thm:solu}). We also derive a Rota-Baxter operator of weight $-\lambda$ from a solution of an  AYBE of weight $\lambda$ (Theorem~\mref{thm:RB2}).
We end this section by giving a  one-to-one correspondence between the solutions of the associative Yang-Baxter equation of weight $\lambda$  and Rota-Baxter operators of weight $-\lambda$ on matrix algebras (Theorem~\mref{thm:bijection}).

In Section~\mref{sec:qua}, we first propose the concept of weighted quasitriangular $\epsilon$-unitary bialgebras (Definition~\mref{def:qua}). Similar to the classical quasitriangular bialgebras, we then give some properties of weighted quasitriangular $\epsilon$-unitary bialgebras (Proposition~\mref{prop:quaiff}). We finally derive a dendriform algebra from a weighted quasitriangular $\epsilon$-unitary bialgebra (Theorem~\mref{thm:quadri}).

{\bf Notation.}
Throughout this paper, let $\bfk$ be a unitary commutative ring unless the contrary is specified,
which will be the base ring of all modules, algebras, coalgebras, bialgebras, tensor products, as well as linear maps.
By an algebra we mean an associative \bfk-algebra (possibly without unit)
and by a coalgebra we mean a coassociative \bfk-coalgebra (possibly without counit).
For an algebra $A$, we view $A\ot A$ as an $A$-bimodule via
\begin{equation}
a\cdot(b\otimes c):=ab\otimes c\,\text{ and }\, (b\otimes c)\cdot a:= b\otimes ca,
\mlabel{eq:dota}
\end{equation}
where $a,b,c\in A$.

\section{Weighted infinitesimal unitary bialgebras and examples}\label{sec:infbi}
In this section, we first recall the concept of a weighted infinitesimal (unitary) bialgebra~\mcite{GZ},
which generalise simultaneously the one introduced by Joni and Rota~\mcite{JR} and the one initiated by Loday and Ronco~\mcite{LR06}.
Then we proceed to equip a matrix algebra with a weighted infinitesimal unitary bialgebraic structure, in terms of a construction of a suitable coproduct.

\subsection{Weighted infinitesimal unitary bialgebras}
The following is the concept of a weighted infinitesimal (unitary) bialgebra proposed in~\mcite{GZ}.

\begin{defn}\mcite{GZ}
Let $\lambda$ be a given element of $\bfk$.
An {\bf infinitesimal bialgebra} (abbreviated {\bf $\epsilon$-bialgebra}) {\bf of weight $\lambda$} is a triple $(A,m,\Delta)$
consisting of an algebra $(A,m)$ (possibly without unit) and a coalgebra $(A,\Delta)$ (possibly without counit) that satisfies
\begin{equation}
\Delta (ab)=a\cdot \Delta(b)+\Delta(a) \cdot b+\lambda (a\ot b)\, \text{ for }\, a, b\in A.
\mlabel{eq:cocycle}
\end{equation}
If further $(A,m,1)$ is a unitary algebra, then the quadruple $(A,m,1, \Delta)$ is called an {\bf infinitesimal unitary bialgebra} (abbreviated {\bf $\epsilon$-unitary bialgebra}) {\bf of weight $\lambda$}.
\mlabel{def:iub}
\end{defn}

The concept of an $\epsilon$-bialgebra morphism is given as usual.

\begin{defn}\mcite{GZ}
Let $A$ and  $B$ be two $\epsilon$-bialgebras of weight $\lambda$.
A map $\phi : A\rightarrow B$ is called an {\bf infinitesimal bialgebra morphism} (abbreviated $\epsilon$-bialgebra morphism) if $\phi$ is an algebra morphism and a coalgebra morphism. The concept of an {\bf infinitesimal unitary bialgebra morphism} can be defined in the same way.
\end{defn}

\begin{remark}\mlabel{remk:unit1}
\begin{enumerate}
\item Let $(A,m,1, \Delta)$ be an $\epsilon$-unitary bialgebra of weight $\lambda$. Then $\Delta(1)=-\lambda(1\ot1)$ by
\begin{align*}
\Delta(1)=\Delta(1\cdot1)=1 \cdot \Delta(1) +\Delta(1)\cdot 1 +\lambda (1\ot 1)=2\Delta(1)+\lambda (1\ot 1).
\end{align*}

\item \mlabel{remk:b}
Aguiar~\mcite{MA} pointed out that there is no non-zero $\epsilon$-bialgebra of weight zero which is both unitary and counitary.
Indeed, it follows from the counicity that
$$1\ot 1_{\bfk}=(\id \ot \epsilon)\Delta(1)=0,$$ and so $1=0$.
\end{enumerate}

\end{remark}

\begin{exam}\label{exam:bialgebras}
Here are some examples of $\epsilon$-(unitary) bialgebras.
\begin{enumerate}
\item Any unitary algebra $(A, m,1)$ is an $\epsilon$-unitary bialgebra of weight zero by taking $\Delta=0$.
\item \cite[Example~2.3.5]{MA} The {\bf polynomial algebra} $\bfk \langle x_1, x_2, x_3,\ldots \rangle$ is an $\epsilon$-unitary bialgebra of weight zero with the coproduct $\Delta$
given by Eq.~(\mref{eq:cocycle}) and
 \begin{align*}
    \Delta (x_n)=\sum_{i=0}^{n-1}x_{i}\ot x_{n-1-i}=1\ot x_{n-1}+x_1\ot x_{n-2}+ \cdots +x_{n-1}\ot 1,
    \end{align*}
where we set $x_0=1$.
\item \cite[Example~2.3.2]{MA}Let $Q$ be a quiver. The {\bf path algebra} of $Q$ is the associative algebra $\bfk Q =\oplus_{n=0}^{\infty}\bfk Q_n$ whose underlying $\bfk $-module has its basis the set of all paths $a_1a_2\cdots a_n$ of length $n\geq 0$ in $Q$. The multiplication $\ast$ of two paths $a_1a_2\cdots a_n$ and $b_1b_2\cdots b_m$ is defined by
\begin{align*}
(a_1a_2\cdots a_n)\ast(b_1b_2\cdots b_m):=\delta_{t(a_n),s(b_1)}a_1a_2\cdots a_nb_1b_2\cdots b_m,
\end{align*}
where $\delta_{t(a_n),s(b_1)}$ is the Kronecker delta. The path algebra $(\mathbf{k}Q, \ast, \Delta)$ is an $\epsilon$-bialgebra of weight zero with the coproduct defined by
\begin{align*}
\Delta(e):&=0 \, \text { for } e\in Q_0 \\
\Delta(a):&=s(a)\ot t(a)\,\,\, \text { for } a\in Q_1, \text{and}\\
\Delta(a_1a_2\cdots a_n):=s(a_{1})\ot a_2 \cdot\cdots a_n&+a_1\cdots a_{n-1}\ot t(a_n)+\sum_{i=1}^{n-2}a_1\cdots a_i\ot a_{i+2}\cdots a_n \text { for } n\geq 2.
\end{align*}

\item \cite[Section~2.3]{LR06}\label{exam:tensor}
Let $V$ denote a vector space. Recall that the {\bf tensor algebra} $T(V)$ over $V$ is the tensor module,
\begin{align*}
T(V)=\bfk \oplus V\oplus V^{\ot 2}\oplus \cdots \oplus V^{\ot n}\oplus \cdots,
\end{align*}
equipped with the associative multiplication $m_{T}$ called concatenation defined by
\begin{align*}
v_1\cdots v_i\ot v_{i+1}\cdots v_n \mapsto v_1\cdots v_i v_{i+1}\cdots v_n \quad \text{ for } 0 \leq i\leq n,
\end{align*}
 with the convention that $v_1v_0=1$ and $v_{n+1}v_{n}=1$. It is a well-known free associative algebra. The
tensor algebra $T(V)$ is an $\epsilon$-unitary bialgebra of weight $-1$ with the coproduct defined by
\begin{align*}
\Delta(v_1\cdots v_n):=\sum_{i=0}^{n}v_1\cdots v_i\ot v_{i+1}\cdots v_n.
\end{align*}

\item \cite[Propsition~2.6]{GZ}
The {\bf polynomial algebra} $\bfk [x]$ is an $\epsilon$-unitary bialgebra of weight $\lambda$ with the coproduct defined by
    \begin{align*}
    \Delta(1):=-\lambda (1\ot 1) \text{ and }
    \Delta(x^n):=\sum_{i=0}^{n-1}x^{i}\ot x^{n-1-i}+\lambda \sum_{i=1}^{n-1}x^{i}\ot x^{n-i} \quad \text { for } n\geq 1.
    \end{align*}
 \item \cite[Section~1.4]{Foi08} Let $(A, m, 1,\Delta, \varepsilon, c)$ be a braided bialgebra with $A = \bfk\oplus \ker\varepsilon$ and the braiding
$c:A\ot A \to A\ot A$ given by
\begin{align*}
c:\left\{
    \begin{array}{ll}
    1\ot 1 \mapsto 1\ot 1,\\
   a\ot 1\mapsto 1\ot a, \\
    1\ot b \mapsto b\ot 1,\\
    a\ot b \mapsto 0,
    \end{array}
    \right.
\end{align*}
where $a,b\in \ker \varepsilon$.
Then $(A, m, 1,\Delta, \varepsilon)$ is an $\epsilon$-unitary bialgebra of weight $-1$.

\end{enumerate}
\end{exam}

\subsection{An infinitesimal unitary bialgebra on a matrix algebra}\label{sec:sub}
In this subsection, we construct an $\epsilon$-unitary bialgebra of weight $\lambda$ arising from a matrix algebra.

\begin{defn}\cite[Chapter~17]{Lam99}
 A {\bf matrix algebra} $M_n(\bfk)$ is a collection of $n\times n$ matrices over $\bfk$ that form a unitary associative algebra under matrix addition and matrix multiplication.
\end{defn}

The multiplication on $M_n(\bfk)$ will be denoted by $\frakm$.
We now define a coproduct on  matrix algebra $M_n(\bfk)$  to equip it with a coalgebra structrue, with an eye toward constructing an $\epsilon$-unitary bialgebra of weight $\lambda$ on it.
\subsubsection{The case of $\lambda=0$}
Let $L\in M_n(\bfk)$ such that $L^2=0$.
For $M\in M_n(\bfk)$, define
\begin{align}
\col (M):=ML\ot L- L\ot LM.
\mlabel{eq:col}
\end{align}
Note that $\col(E)=0$, where $E\in M_n(\bfk)$ is the identity matrix. We observe that $M_n(\bfk)$ is closed under the coproduct $\col$.

\begin{lemma}\label{lem:comp1}
Let $L\in M_n(\bfk)$ such that $L^2=0$. Then for $M, N \in M_n(\bfk)$,
\begin{align*}
\col(MN)=M\cdot\col(N)+\col(M)\cdot N.
\end{align*}

\end{lemma}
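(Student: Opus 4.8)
The plan is to verify the identity by direct expansion, since both sides are expressed entirely through the definition given in Eq.~(\mref{eq:col}) and the $A$-bimodule structure on $M_n(\bfk)\ot M_n(\bfk)$ recorded in Eq.~(\mref{eq:dota}). No structural input beyond these two formulas is required; in particular, I expect that the hypothesis $L^2=0$ plays no role in this lemma (it will presumably be needed only later, for the coassociativity of $\col$).

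First I would compute the left-hand side straight from Eq.~(\mref{eq:col}):
\begin{align*}
\col(MN)=(MN)L\ot L-L\ot L(MN)=MNL\ot L-L\ot LMN.
\end{align*}
Next I would expand the two summands on the right-hand side. Using $M\cdot(b\ot c)=Mb\ot c$ and $(b\ot c)\cdot N=b\ot cN$ from Eq.~(\mref{eq:dota}), I get
\begin{align*}
M\cdot\col(N)&=M\cdot(NL\ot L-L\ot LN)=MNL\ot L-ML\ot LN,\\
\col(M)\cdot N&=(ML\ot L-L\ot LM)\cdot N=ML\ot LN-L\ot LMN.
\end{align*}
Adding these, the two cross terms $-ML\ot LN$ and $ML\ot LN$ cancel, leaving $MNL\ot L-L\ot LMN$, which is exactly $\col(MN)$.

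The computation is routine; the only thing to watch is that the bimodule action touches only the outer tensor factors, so the inner copies of $L$ never multiply against one another. This is precisely why the cancellation of the middle terms is clean and why $L^2=0$ need not be invoked. I therefore expect no genuine obstacle here beyond careful bookkeeping of which factor each multiplication lands on.
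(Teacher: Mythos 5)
Your proposal is correct and follows essentially the same route as the paper: both expand $\col(MN)$, $M\cdot\col(N)$, and $\col(M)\cdot N$ directly from Eq.~(\mref{eq:col}) and the bimodule action in Eq.~(\mref{eq:dota}), and observe the cancellation of the cross terms $\pm\,ML\ot LN$. Your side remark that $L^2=0$ is not needed here (only for the coassociativity of $\col$ in the next lemma) is also accurate, and indeed the paper's proof never invokes it.
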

\begin{proof}
We have
\begin{align*}
M\cdot\col(N)+\col(M)\cdot N=&M\cdot(NL\ot L-L\ot LN)+(ML\ot L-L\ot LM)\cdot N   \quad (\text{by Eq.~(\mref{eq:col})})\\
=&MNL\ot L-ML\ot LN+ML\ot LN-L\ot LM N \quad (\text{by Eq.~(\ref{eq:dota})})\\
=&MNL\ot L-L\ot LM N\\
=&(MN)L\ot L-L\ot L(M N)\\
=&\col(MN) \quad (\text{by Eq.~(\ref{eq:col})}).
\end{align*}
This completes the proof.
\end{proof}

\begin{lemma}
Let $L\in M_n(\bfk)$ such that $L^2=0$.
Then the pair $(M_n(\bfk), \col)$ is a coalgebra (without counit). \mlabel{lem:coal}
\end{lemma}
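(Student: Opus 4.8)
The plan is to verify that $\col$ is coassociative, since a coalgebra without counit is precisely a module equipped with a coassociative coproduct; thus it suffices to show
$$(\col \ot \id)\col = (\id \ot \col)\col$$
as maps $M_n(\bfk) \to M_n(\bfk)^{\ot 3}$. I would first record the single most useful consequence of the hypothesis $L^2 = 0$, namely that $\col(L) = L^2 \ot L - L \ot L^2 = 0$ and, more generally, that any term in which the factor $L^2$ appears is annihilated. These identities are what collapse the otherwise four-term expansions down to a single surviving term on each side.

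Next I would compute the left-hand side. Applying $\col \ot \id$ to $\col(M) = ML \ot L - L \ot LM$ gives $\col(ML) \ot L - \col(L) \ot LM$. Here $\col(L) = 0$, and $\col(ML) = ML^2 \ot L - L \ot LML = -L \ot LML$ because $ML^2 = 0$. Hence the left-hand side reduces to $-L \ot LML \ot L$.

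Then I would compute the right-hand side symmetrically. Applying $\id \ot \col$ to $\col(M)$ gives $ML \ot \col(L) - L \ot \col(LM)$; again $\col(L) = 0$, while $\col(LM) = LML \ot L - L \ot L^2 M = LML \ot L$. Thus the right-hand side also equals $-L \ot LML \ot L$, and coassociativity follows. Since there is no counit to check, this completes the verification that $(M_n(\bfk), \col)$ is a coalgebra.

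The computation is entirely routine; the only thing to watch is the bookkeeping of which tensor slot each $\col$ acts on together with the consistent use of $L^2 = 0$ to discard the three vanishing terms. I would not expect any genuine obstacle here — the design of $\col$ via a square-zero element $L$ is exactly what forces the nilpotent terms to cancel and the two triple-tensor expressions to agree.
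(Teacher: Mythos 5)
Your proof is correct and follows essentially the same route as the paper's: both verify coassociativity by expanding $(\col\ot\id)\col(M)$ and $(\id\ot\col)\col(M)$, using $\col(L)=0$ together with $L^2=0$ to kill the vanishing terms, and both sides collapse to the same element $-L\ot LML\ot L$.
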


\begin{proof}
It is enough to show the coassociative law:
\begin{align}
(\id\otimes \col)\col(M)=(\col\otimes \id)\col(M)\,\text{ for } M\in M_n(\bfk).
\mlabel{eq:coassp}
\end{align}
Applying Eq.~(\mref{eq:col}) and $L^2=0$, we have
\begin{align}
\col(L)=L^2\ot L-L\ot L^2=0.
\mlabel{eq:coll}
\end{align}
On the one hand,
\begin{align*}
(\id\otimes \col)\col(M)=&\ (\id\otimes \col)(ML\ot L- L\ot LM) \quad (\text{by Eq.~(\ref{eq:col})})\\
=&\ ML\ot \col(L)- L\ot \col(LM)\\
=&\  - L\ot \col(LM) \quad (\text{by Eq.~(\ref{eq:coll})})\\
=&\ - L\ot (LML\ot L- L\ot L^2M)\quad (\text{by Eq.~(\ref{eq:col})})\\
=&\ - L\ot LML\ot L \quad (\text{by $L^2=0$}).
\end{align*}
On the other hand,
\begin{align*}
(\col\otimes \id)\col(M)=&\ (\col\otimes \id)(ML\ot L- L\ot LM) \quad (\text{by Eq.~(\ref{eq:col})})\\
=&\ \col(ML)\ot L- \col(L)\ot LM\\
=&\ \col(ML)\ot L \quad (\text{by Eq.~(\ref{eq:coll})})\\
=&\ (ML^2\ot L- L\ot LML)\ot L\quad (\text{by Eq.~(\ref{eq:col})})\\
=&\ - L\ot LML\ot L\quad (\text{by $L^2=0$}),
\end{align*}
whence Eq.~(\mref{eq:coassp}) holds.
\end{proof}

Now we arrive at our first main result in this subsection.

\begin{theorem}
Let $L\in M_n(\bfk)$ such that $L^2=0$. Then the quadruple $(M_n(\bfk), \frakm, E,\col)$ is an $\epsilon$-unitary bialgebra of weight zero.
\mlabel{thm:fma}
\end{theorem}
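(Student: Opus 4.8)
The plan is to read off all three defining ingredients of an $\epsilon$-unitary bialgebra of weight zero directly from Definition~\ref{def:iub}, since the two preceding lemmas have already done essentially all of the real work. With $\lambda = 0$ the compatibility condition~\eqref{eq:cocycle} collapses to the derivation identity $\col(MN) = M\cdot\col(N) + \col(M)\cdot N$, so the weight term $\lambda(M\ot N)$ plays no role here and there is nothing extra to track.

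First I would observe that $(M_n(\bfk), \frakm, E)$ is a unitary associative algebra by the very definition of the matrix algebra, with the identity matrix $E$ serving as the unit; this step is immediate and requires no computation. Second, the coalgebra axiom is furnished verbatim by Lemma~\ref{lem:coal}: under the standing hypothesis $L^2 = 0$, the coproduct $\col$ is coassociative, so $(M_n(\bfk), \col)$ is a coalgebra (without counit), and I would simply cite this. Third, the genuinely substantive point, namely the compatibility between the product $\frakm$ and the coproduct $\col$, is precisely the content of Lemma~\ref{lem:comp1}, which asserts $\col(MN) = M\cdot\col(N) + \col(M)\cdot N$ for all $M, N \in M_n(\bfk)$; this is exactly~\eqref{eq:cocycle} specialized to $\lambda = 0$.

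Assembling these three observations verifies every clause of Definition~\ref{def:iub} with $\lambda = 0$, which establishes that $(M_n(\bfk), \frakm, E, \col)$ is an $\epsilon$-unitary bialgebra of weight zero, and completes the argument. In this sense the theorem is a formal corollary of the two lemmas, and the only possible concern is bookkeeping: one should double-check that the unit $E$ is compatible, i.e. that Remark~\ref{remk:unit1} is consistent, which it is since $\col(E) = E L \ot L - L \ot L E = L \ot L - L \ot L = 0 = -\lambda(E\ot E)$ when $\lambda = 0$.

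The main obstacle therefore lies not in the theorem itself but in its two supporting lemmas, which is where any difficulty has been absorbed. The derivation property (Lemma~\ref{lem:comp1}) depends on the telescoping cancellation of the two cross terms $-ML\ot LN$ and $+ML\ot LN$ that arise when one expands $M\cdot\col(N)$ and $\col(M)\cdot N$ using the $A$-bimodule action~\eqref{eq:dota}. Coassociativity (Lemma~\ref{lem:coal}) is the more delicate of the two and is where the hypothesis $L^2 = 0$ is indispensable: it forces $\col(L) = L^2 \ot L - L \ot L^2 = 0$, which kills one summand in each of $(\id \ot \col)\col(M)$ and $(\col \ot \id)\col(M)$, and then a further application of $L^2 = 0$ makes both iterated coproducts reduce to the common value $-L \ot LML \ot L$. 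Once one trusts those two computations, the present theorem needs no additional ideas.
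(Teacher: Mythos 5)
Your proposal is correct and follows essentially the same route as the paper, whose entire proof of Theorem~\ref{thm:fma} is to cite Lemma~\ref{lem:comp1} for the weight-zero compatibility $\col(MN)=M\cdot\col(N)+\col(M)\cdot N$ and Lemma~\ref{lem:coal} for coassociativity of $\col$. Your additional checks (that $(M_n(\bfk),\frakm,E)$ is a unitary algebra and that $\col(E)=0$ is consistent with Remark~\ref{remk:unit1}) are harmless elaborations of the same argument, so nothing further is needed.
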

\begin{proof}
It follows from Lemmas~\mref{lem:comp1} and ~\mref{lem:coal}.
\end{proof}

\begin{remark}
\begin{enumerate}
\item It should be pointed out that Theorem~\mref{thm:fma} generalizes the case of $n=2$ studied in~\cite[Example~2.3.7]{MA}.

\item By Remark~\mref{remk:unit1}~(\mref{remk:b}), it cann't add a counit to $(M_n(\bfk), \frakm, E,\col)$.
\end{enumerate}
\end{remark}

\subsubsection{The case of $\lambda\neq 0$}
Let $\lambda\in \bfk\setminus\{0\}$ be given. For any $M\in M_n(\bfk)$, define
\begin{align}
\coll (M):=M\ot  (-\lambda E),
\mlabel{eq:col1}
\end{align}
where $E\in M_n(\bfk)$ is the identity matrix.
Note that $\coll(E)=-\lambda (E\ot E)$ and $M_n(\bfk)$ is closed under the coproduct $\coll$.

\begin{lemma}\label{lem:comp2}
Let $M, N \in M_n(\bfk)$. Then
\begin{align*}
\coll(MN)=M\cdot\coll(N)+\coll(M)\cdot N+\lambda (M\ot N).
\end{align*}
\end{lemma}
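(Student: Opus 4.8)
The plan is to verify Lemma~\ref{lem:comp2} by a direct computation, expanding the right-hand side using the definition of $\coll$ in Eq.~\eqref{eq:col1} together with the $A$-bimodule action on $M_n(\bfk)\ot M_n(\bfk)$ given in Eq.~\eqref{eq:dota}. This is the exact analogue of the calculation carried out in Lemma~\ref{lem:comp1}, except that now we must track the extra weight term $\lambda(M\ot N)$ and confirm that it is precisely what is needed to make the compatibility hold.

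First I would substitute $\coll(N)=N\ot(-\lambda E)$ and $\coll(M)=M\ot(-\lambda E)$ into the right-hand side, obtaining
\[
M\cdot\bigl(N\ot(-\lambda E)\bigr)+\bigl(M\ot(-\lambda E)\bigr)\cdot N+\lambda(M\ot N).
\]
Next I would apply the left and right bimodule actions of Eq.~\eqref{eq:dota}: the left action $M\cdot(N\ot(-\lambda E))$ becomes $MN\ot(-\lambda E)$, while the right action $(M\ot(-\lambda E))\cdot N$ becomes $M\ot(-\lambda E)N=M\ot(-\lambda N)=-\lambda(M\ot N)$. The two terms $-\lambda(M\ot N)$ and $+\lambda(M\ot N)$ then cancel, leaving exactly $MN\ot(-\lambda E)$, which by Eq.~\eqref{eq:col1} equals $\coll(MN)$.

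There is no real obstacle here; the computation is short and mechanical. The one point requiring slight care is the bookkeeping of where the scalar $-\lambda E$ sits: because $E$ is the identity matrix, $(-\lambda E)N=-\lambda N$ and $N\cdot$ acting in the left slot gives $MN$, so the second tensor factor in the right-action term collapses to $-\lambda(M\ot N)$ rather than producing any genuinely new matrix product. The design of $\coll$ is exactly what forces the cancellation of the spurious $-\lambda(M\ot N)$ against the deliberately inserted weight term $+\lambda(M\ot N)$, which is the conceptual content of the lemma.
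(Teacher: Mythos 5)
Your computation is correct and is essentially identical to the paper's own proof: both expand $M\cdot\coll(N)+\coll(M)\cdot N$ via Eq.~(\mref{eq:col1}) and the bimodule action of Eq.~(\mref{eq:dota}), with the right-action term collapsing to $-\lambda(M\ot N)$ and cancelling the weight term. The only cosmetic difference is that you carry the $+\lambda(M\ot N)$ along from the start, whereas the paper computes the two derivation terms first and then rearranges.
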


\begin{proof}
We have
\begin{align*}
M\cdot\coll(N)+\coll(M)\cdot N&=M\cdot \Big(N\ot  (-\lambda E) \Big)+\Big(M\ot  (-\lambda)E\Big)\cdot N  \quad (\text{by Eq.~(\mref{eq:col1})}),\\
&=MN\ot (-\lambda E) -\lambda (M\ot N)\quad (\text{by Eq.~(\ref{eq:dota})})\\
&=\coll(MN)-\lambda( M\ot N).
\end{align*}
Thus
\begin{align*}
\coll(MN)=M\cdot\coll(N)+\coll(M)\cdot N+\lambda (M\ot N).
\end{align*}
This completes the proof.
\end{proof}

\begin{lemma}
The pair $(M_n(\bfk), \coll)$ is a coalgebra (without counit). \mlabel{lem:coal1}
\end{lemma}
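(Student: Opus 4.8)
The plan is to verify the coassociativity law
\begin{align*}
(\id\otimes \coll)\coll(M)=(\coll\otimes \id)\coll(M)\,\text{ for } M\in M_n(\bfk),
\end{align*}
since $(M_n(\bfk),\coll)$ is a coalgebra precisely when this identity holds. The key simplification is that $\coll$ has the very rigid form $\coll(M)=M\ot(-\lambda E)$, where the second tensor slot is a fixed scalar multiple of the identity matrix $E$ independent of $M$. So I expect both sides to reduce immediately to the same triple tensor.

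First I would compute the left-hand side. Starting from $\coll(M)=M\ot(-\lambda E)$ and applying $\id\otimes\coll$, the $\coll$ acts only on the second factor $-\lambda E$, and by Eq.~(\mref{eq:col1}) we have $\coll(-\lambda E)=(-\lambda E)\ot(-\lambda E)=\lambda^2(E\ot E)$. Hence
\begin{align*}
(\id\otimes \coll)\coll(M)=M\ot\coll(-\lambda E)=M\ot(-\lambda E)\ot(-\lambda E).
\end{align*}
Next I would compute the right-hand side: applying $\coll\otimes\id$ to $M\ot(-\lambda E)$, the $\coll$ acts on the first factor $M$, giving $\coll(M)\ot(-\lambda E)=\big(M\ot(-\lambda E)\big)\ot(-\lambda E)$, which equals
\begin{align*}
(\coll\otimes \id)\coll(M)=M\ot(-\lambda E)\ot(-\lambda E).
\end{align*}
Comparing the two displays shows they agree, so coassociativity holds and the pair is a coalgebra (without counit).

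There is essentially no obstacle here: because the second tensor factor of $\coll$ is always the constant $-\lambda E$, both comultiplication composites just append another copy of $-\lambda E$, and associativity of the tensor product does the rest. The only point requiring a little care is bookkeeping the scalar $-\lambda$ correctly in the slot on which $\coll$ acts (noting $\coll(E)=-\lambda(E\ot E)$, as already recorded after Eq.~(\mref{eq:col1})), but this is routine. I would present the two one-line computations and conclude that Eq.~(\mref{eq:coassp})-type coassociativity is satisfied.
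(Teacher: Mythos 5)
Your proposal is correct and follows essentially the same route as the paper: both verify coassociativity by direct computation, observing that since the second tensor slot of $\coll$ is the constant $-\lambda E$, each composite simply appends another factor of $-\lambda E$, yielding $M\ot(-\lambda E)\ot(-\lambda E)$ on both sides. The scalar bookkeeping (via $\coll(E)=-\lambda(E\ot E)$, equivalently $\coll(-\lambda E)=\lambda^2(E\ot E)$) matches the paper's calculation exactly.
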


\begin{proof}
It suffices to show the coassociative law:
\begin{align}
(\id\otimes \coll)\coll(M)=(\coll\otimes \id)\coll(M)\,\text{ for } M\in M_n(\bfk).
\mlabel{eq:coassp1}
\end{align}
By Eq.~(\mref{eq:col1}), we have
\begin{align*}
(\id\otimes \coll)\coll(M)&=(\id\otimes \coll)\Big(M\ot  (-\lambda E)\Big)
=M\ot (-\lambda) \coll(E)=\lambda^2 M\ot E\ot E\\
&=M\ot (-\lambda E)\ot (-\lambda E)
=\coll(M)\ot (-\lambda E)\\
&=(\coll\ot \id)(M\ot (-\lambda E))=(\coll\otimes \id)\coll(M),
\end{align*}
as desired.
\end{proof}

\begin{theorem}
Let $\lambda\in \bfk\setminus \{0\}$ be given. Then the quadruple $(M_n(\bfk), \frakm, E,\coll)$ is an $\epsilon$-unitary bialgebra of weight $\lambda$.
\mlabel{thm:fma1}
\end{theorem}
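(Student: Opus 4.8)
The plan is to prove Theorem~\ref{thm:fma1} in complete parallel to the way Theorem~\ref{thm:fma} was established, namely by combining the two preparatory lemmas already at hand. The quadruple $(M_n(\bfk), \frakm, E, \coll)$ must satisfy three requirements: that $(M_n(\bfk),\frakm,E)$ is a unitary algebra, that $(M_n(\bfk),\coll)$ is a coalgebra, and that the weight-$\lambda$ $\epsilon$-bialgebra compatibility~(\ref{eq:cocycle}) holds. The first requirement is the standard fact that a matrix algebra is a unitary associative algebra, which is given. The second requirement is precisely the content of Lemma~\ref{lem:coal1}, where coassociativity of $\coll$ was verified directly from the definition~(\ref{eq:col1}). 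The third requirement is exactly the identity proved in Lemma~\ref{lem:comp2}, which is the weight-$\lambda$ derivation property
\begin{align*}
\coll(MN)=M\cdot\coll(N)+\coll(M)\cdot N+\lambda (M\ot N)
\end{align*}
for all $M, N\in M_n(\bfk)$, matching~(\ref{eq:cocycle}) verbatim with $A=M_n(\bfk)$ and $\Delta=\coll$.

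First I would observe that nothing further needs to be computed: the hypothesis $\lambda\in\bfk\setminus\{0\}$ is precisely the standing assumption under which $\coll$ was defined in~(\ref{eq:col1}) and under which both lemmas were stated, so no additional case analysis or constraint on $\lambda$ arises. Thus the proof reduces to citing Lemma~\ref{lem:comp2} for the compatibility condition and Lemma~\ref{lem:coal1} for coassociativity, together with the fact that $M_n(\bfk)$ is unitary with identity $E$. This mirrors exactly the one-line proof given for Theorem~\ref{thm:fma}, which invoked Lemmas~\ref{lem:comp1} and~\ref{lem:coal}.

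There is essentially no obstacle here, since all the substantive work has been offloaded into the two lemmas; the theorem is an assembly statement. If I wanted to be maximally careful I would note that the definition of an $\epsilon$-unitary bialgebra of weight $\lambda$ (Definition~\ref{def:iub}) requires only the algebra structure, the coalgebra structure, and the single compatibility~(\ref{eq:cocycle}), with no counit demanded — which is consistent with the fact that $\coll$ carries no counit (indeed Remark~\ref{remk:unit1}~(\ref{remk:b}) explains why one cannot generally adjoin one). Hence the concise proof is:
\begin{proof}
It follows from Lemmas~\ref{lem:comp2} and~\ref{lem:coal1}.
\end{proof}
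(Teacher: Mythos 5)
Your proposal is correct and coincides with the paper's own proof, which likewise disposes of the theorem by citing Lemma~\ref{lem:comp2} for the weight-$\lambda$ compatibility and Lemma~\ref{lem:coal1} for coassociativity, exactly mirroring the proof of Theorem~\ref{thm:fma}. Your additional remarks on the absence of a counit and the role of the hypothesis $\lambda\neq 0$ are accurate but not needed.
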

\begin{proof}
It follows from Lemmas~\mref{lem:comp2} and ~\mref{lem:coal1}.
\end{proof}

\subsection{An infinitesimal unitary Hopf algebra on a matrix algebra}
In this subsection, we equip the $\epsilon$-unitary bialgebra  $(M_n(\bfk), m, E,\col)$ of weight zero with an antipode such that it is further an $\epsilon$-unitary Hopf algebra, under the view of Aguiar~\mcite{MA}.
Denote by $\Hom_{\bfk}(A,B)$ the module consisting of linear maps from $A$ to $B$ throughout the remainder of this section.

\begin{defn}~\cite[Chapter~4.1]{CP94}
Let $A=(A, m, 1,\Delta, \varepsilon)$ be a classical bialgebra. Then the convolution product $\ast $ on $\Hom_{\bfk}(A,A)$ is defined to be the composition:
\begin{align*}
f\ast g :=m (f\ot g)  \Delta \, \text { for }\,  f,g\in \Hom_{\bfk}(A,A),
\end{align*}
and the triple $(\Hom_{\bfk}(A,A), \ast, 1\circ \varepsilon)$ is called a convolution algebra, where $1\circ \varepsilon$ is the unit with respect to $\ast$.
The antipode $S$ is defined to be the inverse of the identity map with respect to the convolution product.
\end{defn}

\begin{remark}
The question facing us is whether we can define the antipode for an $\epsilon$-unitary bialgebra as one does for classical bialgebras $A$. Aguiar~\cite[Remark~2.2]{MA} answers this question `No' due to the lack of the unit $1\circ \varepsilon$ with respect to $\ast$, see Remark~\mref{remk:unit1}~(\mref{remk:b}).
\end{remark}
However, Aguiar~\mcite{MA} provided the perfect notion of antipode $S$ for an $\epsilon$-bialgebra.

\begin{defn}~\cite[Section~3]{MA}
Let $A=(A, m, \Delta)$ be an $\epsilon$-bialgebra. Then the {\bf circular convolution} $\circledast$ on $\Hom_{\bfk}(A,A)$ is defined by
$$f\circledast g :=f\ast g+f+g \, \text {, that is, } \, (f\circledast g)(a) :=\sum_{(a)}f(a_{(1)})g({a_{(2)}})+f(a)+g(a)\, \text{ for } a\in A.$$
Note that $f\circledast 0 = f = 0\circledast f$ and so $0\in \Hom_{\bfk}(A,A)$ is the unit with respect to the circular convolution $\circledast$.
\end{defn}

Further Aguiar introduced the concept of an infinitesimal Hopf algebra via circular convolution~\cite[Definition~3.1]{MA}.
Inspired by this, we set the following definition in~\mcite{GZ}.

\begin{defn}
 An infinitesimal unitary bialgebra $(A, m, 1, \Delta)$ of weight $\lambda$ is called an {\bf infinitesimal unitary Hopf algebra} (abbreviated {\bf $\epsilon$-unitary Hopf algebra}) of {\bf weight $\lambda$} if the identity map $\id\in \Hom_{\bfk}(A,A)$ is invertible with respect to the circular convolution. In this case, the inverse $S\in \Hom_{\bfk}(A,A)$ of $\id$ is called the {\bf antipode} of $A$. It is characterized by
 the equations
\begin{equation}
\sum_{(a)}S(a_{(1)})a_{(2)}+S(a)+a=0=\sum_{(a)}a_{(1)}S(a_{(2)})+a+S(a) \, \text{ for }\, a \in A,
\mlabel{eq:ants}
\end{equation}
where $\Delta(a) = \sum_{(a)} a_{(1)} \ot a_{(2)}$.
\mlabel{de:deha}
\end{defn}

The $\epsilon$-unitary Hopf algebra satisfies many properties analogous to those of a classical Hopf algebra~\cite[Propositions~3.7, 3.12]{MA}.

\begin{remark}
\begin{enumerate}
\item Let $A$ be an $\epsilon$-unitary Hopf algebra of weight zero with antipode $S$. Then
\begin{align*}
S(xy)=-S(x)S(y) \text{ and }\sum_{(x)} S(x_{(1)})\ot S(x_{(2)})=-\sum_{(S(x))}S(x)_{(1)} \ot S(x)_{(2)} =-\Delta S(x).
\end{align*}
\item If $(A,m,1, \Delta)$ is an $\epsilon$-unitary Hopf algebra of weight zero with the antipode $S$, then so is $(A,m^{\mathrm{op}}, 1,\Delta^{\mathrm{cop}})$ with the same antipode $S$.

\item It follows from  Eq.~(\mref{eq:ants}) that $S(1)=-1$ by taking $a=1$.
\end{enumerate}
\end{remark}

\begin{defn}\cite[Section~4]{MA}
Let $A$ be an algebra and $C$ a coalgebra. The map $f:C\to A$ is called {\bf locally nilpotent} with respect to convolution $*$
if for each $c\in C$ there is some $n\geq 1$ such that
\begin{equation}\mlabel{eq:lnil}
f^{\ast n}(c) :=\sum_{(c)}f(c_{(1)})f(c_{(2)})\cdots f(c_{(n+1)})=0,
\end{equation}
where $c_{(1)}, \cdots, c_{(n+1)}$ are from the Sweedler notation $\Delta^n(c) = \sum_{(c)} c_{(1)} \ot \cdots \ot c_{(n+1)}$
and $f^{\ast n}$ is defined inductively by
\begin{align*}
f^{\ast 1}(c) :=\sum_{(c)}f(c_{(1)})f(c_{(2)})\, \text{ and }\,
f^{\ast (n)}:=f^{\ast (n-1)}\ast f.
\end{align*}
\end{defn}

\begin{lemma}
Let $(M_n(\bfk), \, \frakm,\,E,\col)$ be the $\epsilon$-unitary bialgebra of weight zero in Theorem~\mref{thm:fma} and
$$D:= \frakm \col: M_n(\bfk) \to M_n(\bfk).$$
Then for each $k\geq 0$ and $M\in M_n(\bfk)$,  $D^{\ast (k+1)}(M)=0$ and so $D$ is locally nilpotent.
\mlabel{lem:rt12}
\end{lemma}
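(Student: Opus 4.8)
The plan is to reduce everything to the single observation that the composite $D=\frakm\col$ is identically the zero map on $M_n(\bfk)$; once that is in hand, local nilpotency is a formality. Concretely, for an arbitrary $M\in M_n(\bfk)$ I would feed the defining expression of $\col$ from Eq.~(\ref{eq:col}) into the multiplication $\frakm$ and simplify using associativity of matrix multiplication together with the hypothesis $L^2=0$:
\begin{align*}
D(M)=\frakm\col(M)=\frakm(ML\ot L-L\ot LM)=(ML)L-L(LM)=ML^2-L^2M=0.
\end{align*}
Hence $D$ annihilates every element of $M_n(\bfk)$, i.e.\ $D=0$.

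Given this, I would simply unwind the definition of the convolution power. For each $k\geq 0$ one has, by definition of $D^{\ast(k+1)}$ in terms of the iterated coproduct $\col^{k+1}(M)=\sum_{(M)}M_{(1)}\ot\cdots\ot M_{(k+2)}$,
\begin{align*}
D^{\ast(k+1)}(M)=\sum_{(M)}D(M_{(1)})D(M_{(2)})\cdots D(M_{(k+2)}).
\end{align*}
Since $D$ vanishes on every matrix, each factor $D(M_{(i)})$ is zero, so every summand vanishes and therefore $D^{\ast(k+1)}(M)=0$. (Equivalently, I could argue inductively with the recursion $D^{\ast(n)}=D^{\ast(n-1)}\ast D$: the base case $D^{\ast 1}(M)=\sum_{(M)}D(M_{(1)})D(M_{(2)})=0$ is immediate from $D=0$, and convolving any linear map on the right by the zero map $D$ again produces the zero map.) In particular $n=1$ already works, so $D$ is locally nilpotent.

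The hard part here is essentially nonexistent: the entire content of the lemma collapses to the one-line computation that $L^2=0$ forces $D=0$, which trivialises the whole family of convolution identities at once. The only point that deserves a moment's care is the bookkeeping of the bimodule action in Eq.~(\ref{eq:dota}), to confirm that applying $\frakm$ to $\col(M)$ really yields $ML^2-L^2M$ rather than some other reassociation of the tensor factors; after that step is verified, the conclusion is automatic and requires no further estimate.
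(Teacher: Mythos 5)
Your proposal is correct and uses the same key idea as the paper: the single computation $D(M)=\frakm\col(M)=ML^2-L^2M=0$, which is exactly the observation underlying the paper's base case. The only cosmetic difference is that the paper packages the vanishing of the convolution powers as an induction on $k$ via $D^{\ast(\ell+1)}=D^{\ast\ell}\ast D$, whereas you note that $D=0$ identically kills every factor in the expanded definition of $D^{\ast(k+1)}$ directly (and you mention the inductive variant anyway), so the two arguments coincide in substance.
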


\begin{proof}
It suffices to prove the first statement by induction on $k\geq 0$.
Using sweedler notation, we may write
\begin{align*}
 \col(M)=\sum_{(M)}M_{(1)}\ot M_{(2)} \, \text { for }\, M\in M_n(\bfk).
\end{align*}
 For the initial step of $k=0$, we have
\begin{align*}
D^{*(1)}(M)=\sum_{(M)}D(M_{(1)})D(M_{(2)})=0,
\end{align*}
where the last step follows from
\begin{align*}
D(M_{(1)})= \frakm \col (M_{(1)})=M_{(1)}L^2-L^2M_{(1)}=0
\end{align*}
by Eq.~(\mref{eq:col}) and $L^2=0$.
Assume the result is true for $k=\ell$ for an $\ell\geq 1$, and consider the case when $k=\ell+1.$ Then
\begin{align*}
D^{\ast (\ell+1)}(M)=&\ (D^{\ast \ell}\ast D)(M)= \frakm (D^{\ast \ell}\ot D)\col(M)=\sum_{(M)}D^{\ast \ell}(M_{(1)})D(M_{(2)})=0,
\end{align*}
where the last step employs the induction hypothesis.
This completes the proof.
\end{proof}

Denote by $\mathbb{R}$ and $\mathbb{C}$ the field of real numbers and the field of complex numbers, respectively.

\begin{lemma}\cite[Proposition~4.5]{MA}
Let $(A,\,m,\,\Delta)$ be an $\epsilon$-bialgebra and $D :=m\Delta$, and let $\bfk$ be a field. Suppose that either
\begin{enumerate}
\item
$\bfk = \mathbb{R}$ or $\mathbb{C}$ and $A$ is finite dimensional, or
\item
$D$ is locally nilpotent and char$(\bfk)=0$.
\end{enumerate}
Then $A$ is an $\epsilon$-Hopf algebra with bijective antipode $S=-\sum_{n=0}^{\infty}\frac{1}{n!}(-D)^{n}$.
\mlabel{lem:rt3}
\end{lemma}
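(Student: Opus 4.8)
The plan is to produce the antipode explicitly as $S:=-\sum_{n\ge 0}\frac{1}{n!}(-D)^{n}$ (this is the weight-zero setting of Aguiar, so $\Delta(ab)=a\cdot\Delta(b)+\Delta(a)\cdot b$, i.e. Eq.~\mref{eq:cocycle} with $\lambda=0$) and to verify directly that it is a two-sided inverse of $\id$ under the circular convolution, i.e. that $S\circledast\id=0=\id\circledast S$; by the definition of $\circledast$ this amounts to $S\ast\id+S+\id=0$ and $\id\ast S+\id+S=0$. The whole computation rests on a single algebraic identity, which I would establish first: the convolution powers of the identity are governed by the \emph{composition} powers of $D=m\Delta$, namely
\begin{equation*}
\id^{\ast(n+1)}=\frac{1}{n!}\,D^{n}\qquad(n\ge 0),
\end{equation*}
where $D^{n}$ is the $n$-fold composite and $\id^{\ast(n+1)}$ the $(n+1)$-fold convolution power. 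Granting this, associativity of $\ast$ gives at once $D^{n}\ast\id=\id\ast D^{n}=\tfrac{1}{n+1}D^{n+1}$, which is exactly what telescopes the defining equations.

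To prove the displayed identity I would induct on $n$, the substance being the recursion $D\circ\id^{\ast n}=n\,\id^{\ast(n+1)}$. The input is the ``coderivation'' consequence of the $\epsilon$-bialgebra axiom: since $\Delta$ is a derivation for $m$, expanding $\Delta\big(a_{(1)}\cdots a_{(n)}\big)$ by the Leibniz rule produces $n$ terms $a_{(1)}\cdots a_{(i-1)}\cdot\Delta(a_{(i)})\cdot a_{(i+1)}\cdots a_{(n)}$. Applying $m$ to the $i$-th term and then collapsing the refined factor by coassociativity turns it into the $(n+1)$-fold product $\sum_{(a)}a_{(1)}\cdots a_{(n+1)}=\id^{\ast(n+1)}(a)$, \emph{independently of $i$}; summing over $i$ gives $D\big(\id^{\ast n}(a)\big)=n\,\id^{\ast(n+1)}(a)$. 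Since $\id^{\ast 1}=\id=\tfrac{1}{0!}D^{0}$, the recursion yields $\id^{\ast(n+1)}=\tfrac{1}{n!}D^{n}$.

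With these identities the verification is routine. Using $D^{n}\ast\id=\tfrac{1}{n+1}D^{n+1}$ I would compute
\begin{equation*}
S\ast\id=-\sum_{n\ge 0}\frac{(-1)^{n}}{n!}\,D^{n}\ast\id=-\sum_{n\ge 0}\frac{(-1)^{n}}{(n+1)!}\,D^{n+1}=\sum_{m\ge 1}\frac{(-D)^{m}}{m!}=e^{-D}-\id,
\end{equation*}
so that $S\ast\id+S+\id=(e^{-D}-\id)+(-e^{-D})+\id=0$, and symmetrically on the other side from $\id\ast D^{n}=\tfrac1{n+1}D^{n+1}$. For well-definedness: under hypothesis (b) the relation $\id^{\ast(n+1)}=\tfrac1{n!}D^{n}$ converts local nilpotency of $D$ (with respect to convolution) into vanishing of the composite powers $D^{n}(a)$ for $n\gg 0$ (here $\mathrm{char}\,\bfk=0$ is used to divide by factorials), so each $S(a)$ is a finite sum and all rearrangements are legitimate; under hypothesis (a), $D$ is a linear operator on a finite-dimensional real or complex space, so $e^{-D}$ converges as an operator exponential and $f\mapsto f\ast\id$ is a continuous linear map, again justifying the interchange. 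Bijectivity is then immediate: $e^{-D}$ is invertible with inverse $e^{D}$ (both are power series in the single operator $D$, hence commute and satisfy $e^{-D}e^{D}=\id$), whence $S=-e^{-D}$ has inverse $-e^{D}$.

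The main obstacle is the Sweedler bookkeeping behind the recursion $D\circ\id^{\ast n}=n\,\id^{\ast(n+1)}$: one must check carefully that, after the Leibniz expansion of $\Delta(a_{(1)}\cdots a_{(n)})$ and multiplication by $m$, coassociativity genuinely collapses each of the $n$ terms to the \emph{same} $(n+1)$-fold product, regardless of where the coproduct was refined. A secondary technical point is justifying the term-by-term manipulation of the exponential series in case (a), where $D$ need not be nilpotent and the naive geometric-series form $\sum_{n\ge 0}(-1)^{n}\id^{\ast n}$ for the $\circledast$-inverse diverges; there one must rely on convergence of the operator exponential and continuity of convolution rather than on formal rearrangement.
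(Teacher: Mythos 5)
Your proof is correct, and since the paper itself gives no proof of this lemma (it is quoted from Aguiar as \cite[Proposition~4.5]{MA}), the right comparison is with Aguiar's original argument, which yours essentially reconstructs: the identity $\id^{\ast(n+1)}=\frac{1}{n!}D^{n}$ (equivalently $D^{p}\ast D^{q}=\frac{p!\,q!}{(p+q+1)!}D^{p+q+1}$), proved exactly as you do from $\Delta$ being a derivation plus coassociativity, is the engine of his proof, and your verification that $S=-e^{-D}$ satisfies $S\circledast \id=0=\id\circledast S$ matches it. The one step worth making explicit in your well-definedness paragraph is the conversion of convolution-nilpotency into compositional nilpotency: since $D=\id\ast\id$ and $\ast$ is associative, the paper's $D^{\ast n}$ equals the $(2n+2)$-fold convolution power of $\id$, hence equals $\frac{1}{(2n+1)!}D^{\circ(2n+1)}$ in characteristic $0$, so $D^{\ast n}(a)=0$ forces $D^{\circ(2n+1)}(a)=0$ and all higher composite powers of $D$ vanish at $a$, which is precisely what makes $S(a)$ a finite sum (and this uses only invertibility of the integers, consistent with the paper's remark that $\QQ\subseteq\bfk$ suffices).
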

Indeed, in the above lemma, we can replace the condition that $\bfk$ is a field by $\QQ\subseteq \bfk$.
\begin{theorem}
Let $\QQ\subseteq \bfk$. Then the quadruple $(M_n(\bfk), \, \frakm,\,E, \, \col)$  is an $\epsilon$-unitary Hopf algebra of weight zero
with the bijective antipode $S=-\sum_{n=0}^{\infty}\frac{1}{n!}(-D)^{n}$.
\mlabel{thm:rt13}
\end{theorem}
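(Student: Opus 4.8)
The plan is to deduce this as an immediate consequence of the two preceding lemmas together with the generalization of Lemma~\mref{lem:rt3} recorded in the remark just above its statement. By Theorem~\mref{thm:fma} we already know that $(M_n(\bfk),\frakm,E,\col)$ is an $\epsilon$-unitary bialgebra of weight zero, so the only remaining task is to produce the antipode: to show that $\id$ is invertible with respect to the circular convolution $\circledast$ and to identify its inverse with the stated series. Thus the whole argument should be a matter of checking that the hypotheses of Lemma~\mref{lem:rt3} are met and that its conclusion transfers to the ring setting $\QQ\subseteq\bfk$.

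First I would recall from Lemma~\mref{lem:rt12} that $D=\frakm\col$ is locally nilpotent; in fact the computation there gives $D(M_{(1)})=M_{(1)}L^2-L^2M_{(1)}=0$, so every convolution power $D^{\ast(k+1)}$ vanishes. This is precisely the hypothesis of case~(b) of Lemma~\mref{lem:rt3}. Since we assume $\QQ\subseteq\bfk$, each scalar $\tfrac{1}{n!}$ lies in $\bfk$, so the series $S=-\sum_{n=0}^{\infty}\tfrac{1}{n!}(-D)^{n}$ is meaningful, and local nilpotency makes it locally finite (it terminates when applied to any fixed $M$). Invoking Lemma~\mref{lem:rt3}, in the form noted in the remark preceding the theorem, then yields that $(M_n(\bfk),\frakm,E,\col)$ is an $\epsilon$-unitary Hopf algebra of weight zero with bijective antipode given by this series.

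The one point needing care, and the only genuine obstacle, is justifying the passage from ``$\bfk$ a field of characteristic zero'' to ``$\QQ\subseteq\bfk$'' in Lemma~\mref{lem:rt3}. Here I would verify that Aguiar's argument for case~(b) uses the field hypothesis only to guarantee that each $n!$ is invertible and that the exponential-type sum is defined; both survive verbatim once $\QQ\subseteq\bfk$, since $\tfrac{1}{n!}\in\QQ\subseteq\bfk$ and local nilpotency already forces the series to be pointwise finite, with no appeal to finite dimensionality or to an ambient field. As a concrete sanity check one can compute $D$ directly: $D(M)=\frakm\col(M)=ML^2-L^2M=0$ because $L^2=0$, so $D$ is in fact the zero map and the series collapses to $S=-\id$. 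One then confirms Eq.~(\mref{eq:ants}) instantly, since $\sum_{(M)}S(M_{(1)})M_{(2)}+S(M)+M=-D(M)+(-M)+M=0$ together with its mirror identity, and $S=-\id$ is manifestly bijective.
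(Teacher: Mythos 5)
Your proof is correct and follows essentially the same route as the paper's own: Theorem~\mref{thm:fma} for the bialgebra structure, Lemma~\mref{lem:rt12} for local nilpotency of $D=\frakm\col$, and Lemma~\mref{lem:rt3} in the strengthened form $\QQ\subseteq\bfk$ (which the paper asserts in the remark preceding the theorem without proof, and which you rightly justify by noting that Aguiar's case~(b) argument only needs each $\tfrac{1}{n!}$ to lie in $\bfk$ and the series to be pointwise finite). Your closing observation that $D$ is in fact the zero map, so that $S=-\id$ and Eq.~(\mref{eq:ants}) can be verified directly, goes beyond what the paper records but is consistent with, and a nice sanity check on, its argument.
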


\begin{proof}
By Theorem~\mref{thm:fma}, $(M_n(\bfk), \,\frakm,\,E,\, \col)$ is an $\epsilon$-unitary bialgebra of weight zero.
From Lemmas~\mref{lem:rt12},~\mref{lem:rt3} and Definition~\mref{de:deha}, $(M_n(\bfk), \,\frakm,\, E, \,\col)$ is an $\epsilon$-unitary Hopf algebra of weight zero
with bijective antipode $S=-\sum_{n=0}^{\infty}\frac{1}{n!}(-D)^{n}$.
\end{proof}

\section{Pre-Lie algebras on matrix algebras}\label{sec:prelie}
This section is devoted to recall that an $\epsilon$-unitary bialgebra of an arbitrary weight $\lambda$ gives rise to a pre-Lie algebra~\mcite{GZ}, which generalizes the construction of the pre-Lie algebra from an infinitesimal bialgebra~\mcite{Aguu02}. As a consequence, we equip $M_{n}(\bfk)$ with two pre-Lie bialgebraic structures and two Lie algebraic structures

\subsection{Pre-Lie algebras and weighted infinitesimal unitary bialgebras }
In this subsection, we first recall the concept of  pre-Lie algebras and then show the connection from weighted $\epsilon$-unitary bialgebras to pre-Lie algebras.

\begin{defn}~\cite{Man11}
A {\bf (left) pre-Lie algebra} is a $\bfk $-module $A$ together with a binary linear operation $\rhd: A\ot A \rightarrow A$ satisfying
\begin{align}
(a\rhd b)\rhd c-a\rhd (b\rhd c)=(b\rhd a)\rhd c-b\rhd (a\rhd c)\, \text{ for }\, a,b,c\in A.
\mlabel{eq:lpre}
\end{align}
\end{defn}

\begin{exam}
Here are two well-known pre-Lie algebras on dendriform dialgebras and Rota-Baxter algebras, respectively.
\begin{enumerate}
\item Let $(A, \prec, \succ )$ be a dendriform dialgebra. Then the multiplication $\star$ defined by $a\star b=a\prec b+a\succ b$ gives an associative algebra~\mcite{Aguu02}. In addition, define
    \begin{align*}
\rhd: A\ot A \to A, \, a\ot b \mapsto  a\succ b-a\prec b\, \text { for }\, a, b \in A.
\end{align*}
Then $A$ together with $\rhd$ is a pre-Lie algebra~\mcite{Aguu02}.

\item Let $(A, P)$ be a Rota-Baxter algebra of weight $\lambda$. If the weight $\lambda=0$, then the binary operation
\begin{align*}
    \rhd: A\ot A \to A, \, a\ot b \mapsto P(a)\cdot b-b\cdot P(a)\, \text{ for }\, a, b \in A,
\end{align*}
defines a pre-Lie algebra. If the weight $\lambda=-1$, then the binary operation
\begin{align*}
    \rhd: A\ot A \to A, \, a\ot b \mapsto P(a)\cdot b-b\cdot P(a)-x\cdot y \, \text{ for } \, a, b \in A,
\end{align*}
defines a pre-Lie algebra~\mcite{AB08}.
\end{enumerate}
\end{exam}

Let $(A, \rhd)$ be a pre-Lie algebra. For any $a\in A$, let
$$L_a:A\rightarrow A, \quad b\mapsto  a \rhd b$$
be the left multiplication operator. Let
$$L:A\rightarrow \Hom_{\bfk}(A, A),\quad a\mapsto L_a.$$
The close relation between pre-Lie algebras and Lie algebras is characterized by the following two fundamental properties.

\begin{lemma}
\begin{enumerate}
\item
\cite[Theorem~1]{Ger63}
Let $(A, \rhd)$ be a pre-Lie algebra. Define for elements in $A$ a new multiplication by setting
\begin{align*}
[a, b] :=a\rhd b-b\rhd a\,\text{ for }\, a,b\in A.
\end{align*}
Then $(A, [_{-}, _{-}])$ is a Lie algebra.
\mlabel{lem:preL1}
\item \cite[Proposition~1.2]{Bai}
Eq.~(\mref{eq:lpre}) rewrites as
\begin{align*}
L_{[a,b]}=L_a\circ L_b -  L_b\circ L_a=[L_a, L_b],
\end{align*}
which implies that $L: (A, [_{-}, _{-}]) \rightarrow \Hom_{\bfk}(A, A)$ with $a\mapsto L_a$ gives a representation of the Lie algebra $(A, [_{-}, _{-}])$.
\mlabel{lem:preL2}
\end{enumerate}
\mlabel{lem:preL}
\end{lemma}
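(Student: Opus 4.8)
The plan is to handle both parts through the \emph{associator} $\mathrm{as}(a,b,c) := (a\rhd b)\rhd c - a\rhd(b\rhd c)$, observing that Eq.~(\ref{eq:lpre}) is exactly the assertion that $\mathrm{as}$ is symmetric in its first two slots, namely $\mathrm{as}(a,b,c) = \mathrm{as}(b,a,c)$. Both claims then reduce to manipulations of this single identity.

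For part (a), antisymmetry $[a,b] = -[b,a]$ is immediate from $[a,b] = a\rhd b - b\rhd a$, so the real content is the Jacobi identity. I would expand the Jacobiator $J := [[a,b],c] + [[b,c],a] + [[c,a],b]$ directly in terms of $\rhd$, where each double bracket contributes four terms. The key observation is that, after collecting, $J$ is exactly the cyclic sum $\sum_{\mathrm{cyc}}\big(\mathrm{as}(a,b,c) - \mathrm{as}(b,a,c)\big)$. Since every summand vanishes by the left-symmetry of the associator, we get $J = 0$, so $(A, [_{-}, _{-}])$ is a Lie algebra.

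Part (b) is a direct rearrangement. Evaluating both sides on an arbitrary $c \in A$ gives
$$L_{[a,b]}(c) = (a\rhd b - b\rhd a)\rhd c \quad\text{and}\quad [L_a, L_b](c) = a\rhd(b\rhd c) - b\rhd(a\rhd c),$$
so the identity $L_{[a,b]} = [L_a, L_b]$ is equivalent to $(a\rhd b)\rhd c - a\rhd(b\rhd c) = (b\rhd a)\rhd c - b\rhd(a\rhd c)$, which is precisely Eq.~(\ref{eq:lpre}). Since $\big(\Hom_{\bfk}(A,A), [_{-}, _{-}]\big)$ with $[f,g] = f\circ g - g\circ f$ is the Lie algebra underlying the associative algebra $\End_{\bfk}(A)$, the relation $L_{[a,b]} = [L_a, L_b]$ says exactly that $L$ is a homomorphism of Lie algebras, hence a representation of $(A, [_{-}, _{-}])$.

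The only genuine obstacle is the bookkeeping in part (a): one must expand the three double brackets into twelve $\rhd$-terms and check that they regroup exactly into $\sum_{\mathrm{cyc}}\big(\mathrm{as}(a,b,c) - \mathrm{as}(b,a,c)\big)$. This regrouping is purely formal, and once it is in place the left-symmetry of the associator finishes the argument at once; no deeper feature of the pre-Lie product is required.
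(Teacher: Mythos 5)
Your proposal is correct. Note that the paper itself offers no proof of this lemma --- it is quoted from the literature with citations to \cite[Theorem~1]{Ger63} and \cite[Proposition~1.2]{Bai} --- and your argument is exactly the standard one behind those references: left-symmetry of the associator $\mathrm{as}(a,b,c)=(a\rhd b)\rhd c-a\rhd(b\rhd c)$ makes the Jacobiator a cyclic sum of vanishing differences $\mathrm{as}(a,b,c)-\mathrm{as}(b,a,c)$, and part (b) is the same identity read as $L_{[a,b]}=[L_a,L_b]$ after evaluating on $c$. One small refinement worth making, since the paper only assumes $\bfk$ is a unitary commutative ring: for the Lie algebra axioms over a ring in which $2$ need not be invertible one should check the alternating condition $[a,a]=0$ rather than mere antisymmetry, but this is immediate from $[a,a]=a\rhd a-a\rhd a=0$, so nothing in your argument is affected.
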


\begin{remark}
By Lemma~\mref{lem:preL}, a pre-Lie algebra induces a Lie algebra whose left multiplication operators give a representation of the associated commutator Lie algebra.
\end{remark}

Let $(A, m, 1, \Delta)$ be an $\epsilon$-unitary bialgebra of weight $\lambda$. Define
\begin{align}
\rhd: A\ot A \to A, \, a\ot b \mapsto a\rhd b:=\sum_{(b)}b_{(1)}a b_{(2)},
\mlabel{eq:preope}
\end{align}
where $b_{(1)}$ and $b_{(2)}$ are from the Sweedler notation $\Delta (b)=\sum_{(b)}b_{(1)}\ot b_{(2)}$.
The following result captures the connection from weighted $\epsilon$-unitary bialgebras to pre-Lie algebras~\mcite{GZ}.
For the completeness, we record the proof here.
\begin{theorem}
Let $(A, m, 1, \Delta)$ be an $\epsilon$-unitary bialgebra of weight $\lambda$.
Then $A$ equipped with the $\rhd$ in Eq.~(\mref{eq:preope}) is a pre-Lie algebra.
\mlabel{thm:preL}
\end{theorem}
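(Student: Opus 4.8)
The plan is to verify the left pre-Lie identity Eq.~(\mref{eq:lpre}) directly, by computing the associator $(a\rhd b)\rhd c - a\rhd(b\rhd c)$ and checking that it is symmetric under interchanging $a$ and $b$. One side is immediate: unfolding Eq.~(\mref{eq:preope}) twice and using the bimodule action Eq.~(\mref{eq:dota}), the element $a\rhd b=\sum_{(b)}b_{(1)}ab_{(2)}$ is simply sandwiched between $c_{(1)}$ and $c_{(2)}$, giving
\[
(a\rhd b)\rhd c=\sum_{(b),(c)} c_{(1)} b_{(1)}\,a\,b_{(2)} c_{(2)}.
\]

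The harder computation is $a\rhd(b\rhd c)$. Writing $b\rhd c=\sum_{(c)}c_{(1)}bc_{(2)}$ and applying $a\rhd(-)$ termwise forces me to compute $\Delta(c_{(1)}bc_{(2)})$, and this coproduct of a triple product is the main obstacle. Iterating the weight-$\lambda$ cocycle condition Eq.~(\mref{eq:cocycle}) gives the Leibniz-type expansion
\[
\Delta(xyz)=\Delta(x)\cdot yz + x\cdot\Delta(y)\cdot z + xy\cdot\Delta(z) + \lambda(x\ot yz) + \lambda(xy\ot z),
\]
which I would apply with $x=c_{(1)}$, $y=b$, $z=c_{(2)}$ and then reindex each resulting term by coassociativity, converting $\Delta(c_{(1)})\ot c_{(2)}$ and $c_{(1)}\ot\Delta(c_{(2)})$ into the threefold Sweedler form $\sum_{(c)}c_{(1)}\ot c_{(2)}\ot c_{(3)}$. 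Sandwiching $a$ into the middle slot of each of the five resulting terms yields
\begin{align*}
a\rhd(b\rhd c)={} & \sum c_{(1)} a c_{(2)} b c_{(3)} + \sum c_{(1)} b_{(1)} a b_{(2)} c_{(2)} + \sum c_{(1)} b c_{(2)} a c_{(3)} \\
& + \lambda\sum c_{(1)} a b c_{(2)} + \lambda\sum c_{(1)} b a c_{(2)},
\end{align*}
where the sums are the appropriate Sweedler sums over the comultiplications of $b$ and $c$.

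Subtracting, the term $\sum c_{(1)} b_{(1)} a b_{(2)} c_{(2)}$ cancels against $(a\rhd b)\rhd c$, leaving
\begin{align*}
(a\rhd b)\rhd c - a\rhd(b\rhd c)={} & -\sum c_{(1)} a c_{(2)} b c_{(3)} - \sum c_{(1)} b c_{(2)} a c_{(3)} \\
& - \lambda\sum c_{(1)} a b c_{(2)} - \lambda\sum c_{(1)} b a c_{(2)}.
\end{align*}
This expression is visibly invariant under swapping $a$ and $b$: the first two sums interchange with one another, and so do the two $\lambda$-sums. Hence the associator is symmetric in its first two arguments, which is exactly Eq.~(\mref{eq:lpre}), and $(A,\rhd)$ is a pre-Lie algebra. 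I expect the only genuine difficulty to lie in the careful index bookkeeping when expanding $\Delta(c_{(1)}bc_{(2)})$ and applying coassociativity; once the five-term formula is established, the cancellation and the symmetry are both immediate.
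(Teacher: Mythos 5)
Your proposal is correct and follows essentially the same route as the paper's proof: expand $\Delta(c_{(1)}bc_{(2)})$ by iterating the weight-$\lambda$ cocycle condition Eq.~(\ref{eq:cocycle}) into the five-term Leibniz-type formula, reindex via coassociativity into threefold Sweedler notation, cancel the common term $\sum c_{(1)}b_{(1)}ab_{(2)}c_{(2)}$, and conclude from the visible $a\leftrightarrow b$ symmetry of the remaining associator. Your intermediate formulas, including the final expression for $(a\rhd b)\rhd c - a\rhd(b\rhd c)$, match the paper's Eq.~(\ref{eq:preid1}) exactly.
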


\begin{proof}
For any $a,b, c\in A$, using the Sweedler notation $\Delta(c)=\sum_{(c)}c_{(1)}\ot c_{(2)}$, we have
\begin{align*}
\Delta(\sum_{(c)}c_{(1)}bc_{(2)})=&\sum_{(c)}\Delta(c_{(1)}bc_{(2)})=\sum_{(c)}\bigg(c_{(1)}b\cdot \Delta(c_{(2)})+\Delta(c_{(1)}b)\cdot c_{(2)}+\lambda c_{(1)}b \ot c_{(2)}\bigg)\ \ (\text{by Eq.~(\ref{eq:cocycle})})\\
=&\sum_{(c)}\bigg(c_{(1)}b\cdot \Delta(c_{(2)})+\Big(c_{(1)}\cdot \Delta(b)+\Delta(c_{(1)})\cdot b+\lambda(c_{(1)}\ot b)\Big)\cdot c_{(2)}+\lambda c_{(1)}b \ot c_{(2)}\bigg)\\
=&\sum_{(c)}\bigg(c_{(1)}b\cdot \Delta(c_{(2)})+c_{(1)}\cdot \Delta(b)\cdot c_{(2)}+\Delta(c_{(1)})\cdot b c_{(2)}+\lambda c_{(1)}\ot b c_{(2)}+\lambda c_{(1)}b \ot c_{(2)}\bigg)\\
=&\sum_{(c)}\bigg(\Delta(c_{(1)})\cdot b c_{(2)}+c_{(1)}\cdot \Delta(b)\cdot c_{(2)}+c_{(1)}b\cdot \Delta(c_{(2)})+\lambda c_{(1)}\ot b c_{(2)}+\lambda c_{(1)}b \ot c_{(2)}\bigg)\\
=&\sum_{(c)}\bigg(\sum_{(c_{(1)})}c_{(1)(1)}\ot c_{(1)(2)}bc_{(2)}+\sum_{(b)}c_{(1)}b_{(1)}\ot b_{(2)}c_{(2)}+\sum_{(c_{(2)})}c_{(1)}bc_{(2)(1)}\ot c_{(2)(2)}\\
&+\lambda c_{(1)}\ot bc_{(2)}+\lambda c_{(1)}b\ot c_{(2)}\bigg)\\
=&\sum_{(c)}c_{(1)}\ot c_{(2)}bc_{(3)}+\sum_{(c),(b)}c_{(1)}b_{(1)}\ot b_{(2)}c_{(2)}+\sum_{(c)}c_{(1)}bc_{(2)}\ot c_{(3)}\\
&+\lambda \sum_{(c)} c_{(1)}\ot bc_{(2)}+ \lambda \sum_{(c)}c_{(1)}b\ot c_{(2)}\quad \text{(by the coassociative law)}.
\end{align*}
Together this with Eq.~(\mref{eq:preope}), we obtain
\begin{align*}
a\rhd (b\rhd c)=& a\rhd (\sum_{(c)}c_{(1)}b c_{(2)})\\
=& \sum_{(c)}c_{(1)}a c_{(2)}bc_{(3)}+\sum_{(c),(b)}c_{(1)}b_{(1)}a b_{(2)}c_{(2)}+ \sum_{(c)}c_{(1)}bc_{(2)}a c_{(3)} \\
&+ \lambda \sum_{(c)}c_{(1)}a bc_{(2)}+\lambda \sum_{(c)}c_{(1)}ba c_{(2)}.
\end{align*}
Moreover
\begin{align*}
(a\rhd b)\rhd c=(\sum_{(b)}b_{(1)}a b_{(2)})\rhd c=\sum_{(c),(b)}c_{(1)}b_{(1)}a b_{(2)}c_{(2)}.
\end{align*}
Thus
\begin{align}\label{eq:preid1}
(a\rhd b)\rhd c-a\rhd (b\rhd c)=&-\sum_{(c)}c_{(1)}a c_{(2)}bc_{(3)} - \sum_{(c)}c_{(1)}bc_{(2)}a c_{(3)}
-\lambda\sum_{(c)} \bigg(c_{(1)}a bc_{(2)}+c_{(1)}ba c_{(2)}\bigg).
\end{align}
Observe that Eq.~(\mref{eq:preid1}) is symmetric in $a$ and $b$. Hence
\begin{align*}
(a\rhd b)\rhd c-a\rhd (b\rhd c)=(b\rhd a)\rhd c-b\rhd (a\rhd c),
\end{align*}
and so $(A, \rhd)$ is a pre-Lie algebra.
\end{proof}

\subsection{Two pre-Lie and a new Lie algebraic structures on a matrix algebra}
In this subsection, we proceed to give two pre-Lie algebraic structures on a matrix algebra. Consequently, a new Lie algebraic structure on a matrix algebra is induced by Lemma~\mref{lem:preL}~(\mref{lem:preL1}).

\subsubsection{The case of $\lambda=0$}
By Theorem~\mref{thm:fma}, $(M_n(\bfk), \frakm, E,\col)$ is an $\epsilon$-unitary bialgebra of weight zero.
Applying Eq.~(\mref{eq:preope}), we define
\begin{align}
\rhd_{L}: M_n(\bfk) \ot M_n(\bfk) \rightarrow M_n(\bfk),\quad M\rhd_{L} N: =\sum_{(N)}N_{(1)}M N_{(2)} ,
\mlabel{eq:preid}
\end{align}
where $N_{(1)}$ and $N_{(2)}$ are from  $\col(N)=\sum_{(N)}N_{(1)}\ot N_{(2)}$.

\begin{theorem}\mlabel{thm:preope11}
Let $L\in M_n(\bfk)$ such that $L^2=0$. Then the pair $(M_n(\bfk), \rhd_{L})$ is a pre-Lie algebra and so
$(M_n(\bfk), [_{-}, _{-}]_{L})$ is a Lie algebra, where
\begin{align*}
[M, N]_{L}:=NLML+LNLM-MLNL-LMLN.
\end{align*}
\end{theorem}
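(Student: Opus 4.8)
The plan is to invoke Theorem~\mref{thm:preL} directly. Since Theorem~\mref{thm:fma} already establishes that $(M_n(\bfk), \frakm, E, \col)$ is an $\epsilon$-unitary bialgebra of weight zero, the general construction of Eq.~(\mref{eq:preope}) applies verbatim, and Theorem~\mref{thm:preL} immediately yields that $(M_n(\bfk), \rhd_L)$ is a pre-Lie algebra. The only genuine work, then, is to compute the operation $\rhd_L$ and the induced Lie bracket $[_{-}, _{-}]_L$ explicitly in terms of matrix products, so as to verify the stated formula for the bracket.

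First I would compute $M \rhd_L N$ explicitly. By definition, $\col(N) = NL \ot L - L \ot LN$, so in Sweedler notation the coproduct has two terms: $N_{(1)} \ot N_{(2)} = NL \ot L$ and $N_{(1)} \ot N_{(2)} = -L \ot LN$. Substituting into Eq.~(\mref{eq:preid}), which reads $M \rhd_L N = \sum_{(N)} N_{(1)} M N_{(2)}$, I obtain
\begin{align*}
M \rhd_L N = (NL) M (L) - (L) M (LN) = NLML - LMLN.
\end{align*}
Once $\rhd_L$ is in this closed matrix form, everything else is bookkeeping.

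Next I would apply Lemma~\mref{lem:preL}~(\mref{lem:preL1}), which says the bracket is $[M, N]_L := M \rhd_L N - N \rhd_L M$. Using the formula just derived,
\begin{align*}
[M, N]_L = (NLML - LMLN) - (MLNL - LNLM) = NLML + LNLM - MLNL - LMLN,
\end{align*}
which matches the asserted expression. That Lemma~\mref{lem:preL}~(\mref{lem:preL1}) then guarantees $(M_n(\bfk), [_{-}, _{-}]_L)$ is a Lie algebra, so antisymmetry and the Jacobi identity come for free and need not be checked by hand.

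I do not anticipate a serious obstacle here; the statement is essentially a specialization of the general machinery already proved. The one point requiring mild care is the correct reading of the Sweedler notation for a two-term coproduct and, in particular, tracking the sign coming from the second summand $-L \ot LN$ of $\col(N)$; a sign slip there would corrupt both the $\rhd_L$ formula and the final bracket. Beyond that, the proof is just the substitution outlined above.
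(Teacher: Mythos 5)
Your proposal is correct and follows exactly the paper's own argument: the paper likewise cites Theorems~\ref{thm:fma} and~\ref{thm:preL} to obtain the pre-Lie structure, then invokes Lemma~\ref{lem:preL}~(\ref{lem:preL1}) together with the same two-term Sweedler computation $M \rhd_L N = NLML - LMLN$ (with the sign from $-L \ot LN$ handled as you describe) to arrive at the stated bracket. No gaps; nothing further is needed.
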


\begin{proof}
By Theorems~\mref{thm:fma} and~\mref{thm:preL}, $(M_n(\bfk), \rhd_{L})$ is a pre-Lie algebra. The remainder follows from
Lemma~\mref{lem:preL}~(\mref{lem:preL1}) and
\begin{align*}
[M, N]_{L}&=M \rhd_{L} N-N\rhd_{L} M=\sum_{(N)}N_{(1)}M N_{(2)}-\sum_{(M)}M_{(1)}N M_{(2)}\quad (\text{by Eq.~(\ref{eq:preid})})\\
&=NLML-LMLN-(MLNL-LNLM)\quad (\text{by Eq.~(\ref{eq:col})})\\
&=NLML+LNLM-MLNL-LMLN.
\end{align*}
This completes the proof.
\end{proof}


The following example exposes that the Lie bracket $[_{-}, _{-}]_{L}$ is different from the one
given by commutator.

\begin{exam}\label{exam:dif}
Consider the matrix algebra $M_2(\bfk)$.
Let
\[
L=\left(
\begin{array}{cc}
 0 & 1 \\
  0 & 0 \\
  \end{array}
  \right), M=\left(
  \begin{array}{cc}
  a & b \\
   c & d \\
   \end{array}
   \right), N=\left(
  \begin{array}{cc}
  e & f \\
   g & h \\
   \end{array}
   \right).
\] Then $L^2 = 0$
and by Theorem~\mref{thm:preope11},
\begin{align*}
[M, N]_{L}=\left(
             \begin{array}{cc}
               0 & ec+gd-ag-ch \\
               0 & 0 \\
             \end{array}
           \right),
\end{align*}
which is different from the classical Lie bracket
\begin{align*}
[M,N] :=MN-NM=\left(
              \begin{array}{cc}
                bg-fc & af+bh-eb-fd \\
                ce+dg-ga-hc & cf-gb \\
              \end{array}
            \right).
\end{align*}
\end{exam}

\subsubsection{The case of $\lambda \neq 0$}
By Theorem~\mref{thm:fma1}, $(M_n(\bfk), \frakm, E,\coll)$ is an $\epsilon$-unitary bialgebra of weight $\lambda$.
Using Eq.~(\mref{eq:preope}), we define
\begin{align*}
\rhd_{\lambda}: M_n(\bfk) \ot M_n(\bfk) \rightarrow M_n(\bfk), \quad M\rhd_{\lambda} N: =\sum_{(N)}N_{(1)}M N_{(2)} ,
\end{align*}
where $N_{(1)}$ and $N_{(1)}$ are from $\coll(N)=\sum_{(N)}N_{(1)}\ot N_{(2)}$.

\begin{theorem}\mlabel{thm:preope12}
Let $\lambda\in \bfk\setminus\{0\}$ be given. Then the pair $(M_n(\bfk), \rhd_{\lambda})$ is a pre-Lie algebra
and so $(M_n(\bfk), [_{-}, _{-}]_{\lambda})$ is a Lie algebra, where
\begin{align*}
[M, N]_{\lambda}:=\lambda (MN-NM).
\end{align*}
\end{theorem}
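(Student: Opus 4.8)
The plan is to mirror the argument used for Theorem~\mref{thm:preope11} in the case $\lambda = 0$, exploiting the general machinery already established rather than reproving anything from scratch. First I would invoke Theorem~\mref{thm:fma1}, which tells us that $(M_n(\bfk), \frakm, E, \coll)$ is an $\epsilon$-unitary bialgebra of weight $\lambda$. Feeding this into Theorem~\mref{thm:preL}, the operation $\rhd_{\lambda}$ defined through Eq.~(\mref{eq:preope}) automatically equips $M_n(\bfk)$ with a pre-Lie algebra structure, with no further verification of Eq.~(\mref{eq:lpre}) needed. The passage to a Lie algebra is then immediate from Lemma~\mref{lem:preL}~(\mref{lem:preL1}), which guarantees that $[M, N]_{\lambda} := M \rhd_{\lambda} N - N \rhd_{\lambda} M$ satisfies the Jacobi identity.

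The only substantive task is to compute the bracket explicitly and confirm it equals $\lambda(MN - NM)$. Here the simplicity of the coproduct $\coll$ does all the work: since $\coll(N) = N \ot (-\lambda E)$ by Eq.~(\mref{eq:col1}), the Sweedler expansion $\coll(N) = \sum_{(N)} N_{(1)} \ot N_{(2)}$ collapses to the single term with $N_{(1)} = N$ and $N_{(2)} = -\lambda E$. Substituting into Eq.~(\mref{eq:preope}) gives
\begin{align*}
M \rhd_{\lambda} N = \sum_{(N)} N_{(1)} M N_{(2)} = N M (-\lambda E) = -\lambda NM.
\end{align*}
Therefore
\begin{align*}
[M, N]_{\lambda} = M \rhd_{\lambda} N - N \rhd_{\lambda} M = -\lambda NM - (-\lambda MN) = \lambda(MN - NM),
\end{align*}
as claimed.

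There is essentially no hard part here: unlike the $\lambda = 0$ case, where the coproduct $\col$ produces four-term expressions, the weight-$\lambda$ coproduct is pure right multiplication by $-\lambda E$, so the computation is a one-line collapse of the Sweedler sum. The only point worth flagging is the sign bookkeeping in the two applications of Eq.~(\mref{eq:preope}), which must be tracked carefully so as to land on the overall factor $\lambda$ rather than $-\lambda$. As a sanity check consistent with Remark~\mref{rem:liesam}, setting $\lambda = 1$ recovers exactly the classical commutator bracket $MN - NM$ on the matrix algebra.
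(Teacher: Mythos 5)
Your proposal is correct and follows exactly the paper's own route: invoke Theorem~\ref{thm:fma1} and Theorem~\ref{thm:preL} to get the pre-Lie structure, then Lemma~\ref{lem:preL}~(\ref{lem:preL1}) together with Eq.~(\ref{eq:col1}) for the Lie bracket. Your explicit collapse of the Sweedler sum to $M \rhd_{\lambda} N = -\lambda NM$ just spells out the short computation the paper leaves implicit, and the sign bookkeeping is right.
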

\begin{proof}
By Theorems~\mref{thm:fma1} and~\mref{thm:preL}, $(M_n(\bfk), \rhd_{\lambda})$ is a pre-Lie algebra.
The remainder follows from Lemma~\mref{lem:preL}~(\mref{lem:preL1}) and Eq.~(\mref{eq:col1}).
\end{proof}

\begin{remark}
Taking $\lambda=1$ in Theorem~\mref{thm:preope12}, a surprising phenomenon shows that the Lie bracket derived
from $\coll$ is precisely the classical Lie bracket given by commutator on matrix algebras.
\mlabel{rem:liesam}
\end{remark}

\section{Weighted AYBEs and weighted $\epsilon$-unitary bialgebras}\label{sec:wei}
In this section, we first introduce the concept of a weighted associative Yang-Baxter equation, which generalize the results studied in \cite[Section~5]{MA}. As an application, we give a solution of a (homogeneous) associative Yang-Baxter equation  for matrix algebras. We finally derive a Rota-Baxter operator of weight $-\lambda$  from a solution of a weighted associative Yang-Baxter equation.

\subsection{Weighted AYBEs}
Let us first propose the concept of weighted associative Yang-Baxter equations.
Let $A$ be a unitary algebra. For an element $r=\sum_{i}u_i\ot v_i \in A\ot A$, we write
\begin{align*}
r_{12}=\sum_{i}u_i\ot v_i \ot 1, \, r_{13}=\sum_{i}u_i\ot 1\ot v_i , \, r_{23}=\sum_{i}1\ot u_i \ot v_i.
\end{align*}

\begin{defn}\label{def:WAYBE}
Let $\lambda$ be a given element of $\bfk$ and $A$ a unitary algebra.
\begin{enumerate}
\item The equation
\begin{align}
r_{13}r_{12}-r_{12}r_{23}+r_{23}r_{13}=\lambda r_{13} \mlabel{eq:AYBE}
\end{align}
is called the {\bf  associative Yang-Baxter equation (AYBE for short) of weight $\lambda$}.
\item
An element $r=\sum_{i}u_i\ot v_i \in A\ot A$ is called a {\bf solution of the associative Yang-Baxter equation of weight $\lambda$} in $A$ if it satisfies  Eq.~(\mref{eq:AYBE}).
\end{enumerate}
\end{defn}

\begin{remark}\mlabel{rem:3ex}
\begin{enumerate}
\item When $\lambda=0$, the equation
\begin{align*}
r_{13}r_{12}-r_{12}r_{23}+r_{23}r_{13}=0
\end{align*}
 is called a {\bf homogeneous associative Yang-Baxter equation},
initiated by Aguiar~\mcite{MA} and further studied in~\mcite{BGN12}.

\item When $\lambda \neq 0$, the equation
\begin{align}
r_{13}r_{12}-r_{12}r_{23}+r_{23}r_{13}=\lambda r_{13}, \mlabel{eq:nhAYBE}
\end{align}
is called a {\bf non-homogeneous associative Yang-Baxter equation}.

\item When $\lambda=-1$, the equation
\begin{align*}
r_{13}r_{12}-r_{12}r_{23}+r_{23}r_{13}=-r_{13}
\end{align*}
is called a {\bf modified associative Yang-Baxter equation} studied by Ebrahimi-Fard~\mcite{EF02}.
\end{enumerate}
\end{remark}

We now give some basic definitions and notations  that will be used in this section. We refer to~\cite[Section~5]{MA}
for the classical results in the case of $\lambda=0$.

\begin{defn} Let $\lambda$ be a given element of $\bfk$, and let $A$ be a unitary algebra and $W$ an $A$-bimodule.
\begin{enumerate}
\item A linear map $\Delta:A\rightarrow A\ot A$ is called a {\bf derivation of weight $\lambda$} if it satisfies Eq.~(\mref{eq:cocycle}), that is,
\begin{align*}
    \Delta(ab) =a\cdot \Delta(b)+\Delta(a)\cdot b+\lambda(a\ot b) \, \text{ for }\, a, b\in A.
    \end{align*}

\item A linear map $\Delta_{r} :A\rightarrow W$ associated to an element $r\in W$ is called {\bf principal} if it satisfies
    \begin{align}
    \Delta_{r}(a) :=a\cdot r- r\cdot a-\lambda (a\ot 1) \, \text{ for }\, a\in A. \mlabel{eq:prin}
    \end{align}

\item An element $r\in W$ is called {\bf $A$-invariant} if it satisfies
\begin{align*}
a\cdot r=r\cdot a \, \text{ for }\, a\in A.
\end{align*}
\end{enumerate}
\end{defn}

For an algebra $A$, $A\ot A\ot A$ is viewed as an $A$-bimodule via
\begin{align*}
a\cdot (b\ot c\ot d)=ab\ot c\ot d \, \text{ and }\, (b\ot c\ot d)\cdot a =b\ot c\ot da,
\end{align*}
where $a, b, c, d\in A$.

\begin{prop}
The $\Delta_{r}$ defined by Eq.~(\mref{eq:prin}) is a derivation of weight $\lambda$. \mlabel{prop:weid}
\end{prop}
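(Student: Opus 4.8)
The plan is to verify directly that $\Delta_r$ defined by $\Delta_r(a)=a\cdot r-r\cdot a-\lambda(a\ot 1)$ satisfies the weight-$\lambda$ derivation identity of Eq.~(\mref{eq:cocycle}), namely
\begin{align*}
\Delta_r(ab)=a\cdot\Delta_r(b)+\Delta_r(a)\cdot b+\lambda(a\ot b)\quad\text{for }a,b\in A.
\end{align*}
This is a purely computational check using the $A$-bimodule structure on $A\ot A$ recorded in Eq.~(\ref{eq:dota}), so there is no real conceptual obstacle; the main point is simply to keep track of the left and right actions and confirm that all the extra weighted terms cancel correctly.

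First I would expand the left-hand side $\Delta_r(ab)=(ab)\cdot r-r\cdot(ab)-\lambda(ab\ot 1)$ using the definition. Then I would expand the right-hand side term by term: $a\cdot\Delta_r(b)=a\cdot(b\cdot r)-a\cdot(r\cdot b)-\lambda\,a\cdot(b\ot 1)$ and $\Delta_r(a)\cdot b=(a\cdot r)\cdot b-(r\cdot a)\cdot b-\lambda(a\ot 1)\cdot b$. Using the associativity of the bimodule actions in Eq.~(\ref{eq:dota}), one has $a\cdot(b\cdot r)=(ab)\cdot r$ and $(r\cdot a)\cdot b=r\cdot(ab)$, so the two "outer" terms $(ab)\cdot r$ and $-r\cdot(ab)$ already reproduce the corresponding terms of $\Delta_r(ab)$. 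The key cancellation is that the cross terms $-a\cdot(r\cdot b)$ and $+(a\cdot r)\cdot b$ coincide, since both equal the element obtained by multiplying $a$ on the left and $b$ on the right of $r$; that is, $a\cdot(r\cdot b)=(a\cdot r)\cdot b$, again by Eq.~(\ref{eq:dota}).

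It then remains to match the weighted terms. Writing $r=\sum_i u_i\ot v_i$, the bimodule action gives $a\cdot(b\ot 1)=ab\ot 1$ and $(a\ot 1)\cdot b=a\ot b$. Hence the weighted contributions on the right-hand side are $-\lambda(ab\ot 1)$ from $a\cdot\Delta_r(b)$, $-\lambda(a\ot b)$ from $\Delta_r(a)\cdot b$, and the explicit $+\lambda(a\ot b)$ appearing in Eq.~(\mref{eq:cocycle}); the two $\lambda(a\ot b)$ terms cancel and one is left with $-\lambda(ab\ot 1)$, which is exactly the weighted term of $\Delta_r(ab)$. Collecting everything, the right-hand side equals $(ab)\cdot r-r\cdot(ab)-\lambda(ab\ot 1)=\Delta_r(ab)$, as required. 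If I wanted to expect any subtlety at all, it would lie only in the bookkeeping of the $\lambda(a\ot 1)$ term under the left versus right action, since that is the asymmetric feature distinguishing this principal derivation from the weight-zero case of Aguiar~\mcite{MA}; once that term is handled correctly the identity follows immediately.
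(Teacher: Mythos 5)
Your proposal is correct and follows essentially the same route as the paper's own proof: a direct expansion of $a\cdot\Delta_r(b)+\Delta_r(a)\cdot b$ using the bimodule structure of Eq.~(\ref{eq:dota}), with the cross terms $a\cdot(r\cdot b)$ and $(a\cdot r)\cdot b$ cancelling and the weighted terms $a\cdot(b\ot 1)=ab\ot 1$ and $(a\ot 1)\cdot b=a\ot b$ accounting for the $-\lambda(ab\ot 1)$ and $\lambda(a\ot b)$ contributions exactly as in the paper. No gaps; the bookkeeping you flag as the only subtlety is handled correctly.
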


\begin{proof}
For $a, b \in A$, it follows from Eq.~(\mref{eq:prin}) that
\begin{align*}
a\cdot \Delta_{r}(b)+\Delta_{r}(a)\cdot b=&a\cdot \Big(b\cdot r-r\cdot b-\lambda (b\ot 1)\Big)+\Big(a\cdot r-r\cdot a-\lambda (a\ot 1)\Big)\cdot b\\
=&ab\cdot r-a\cdot r\cdot b -\lambda ab\ot 1 +a\cdot r \cdot b-r\cdot ab-\lambda (a\ot b)\\
=&ab\cdot r-r\cdot ab-\lambda ab\ot 1-\lambda (a\ot b)\\
=&\Delta_{r}(ab)-\lambda (a\ot b),
\end{align*}
as desired.
\end{proof}

The following result plays a crucial role to obtain a relationship between solutions of  weighted AYBEs and weighted $\epsilon$-unitary bialgebras.

\begin{theorem}
Let $A$ be a unitary algebra and $r=\sum_i u_i \ot v_i\in A\ot A$.
Then the principle derivation of weight $\lambda$
$$\Delta_r: A\rightarrow A\ot A,\quad a\mapsto a\cdot r - r\cdot a - \lambda (a\ot 1)$$
is coassociative if and only if the element
\begin{align*}
r_{13}r_{12}-r_{12}r_{23}+r_{23}r_{13}-\lambda r_{13}\in A\ot A\ot A
\end{align*}
is $A$-invariant.
\mlabel{thm:iff}
\end{theorem}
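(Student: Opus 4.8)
The plan is to compute both $(\id\ot\Delta_r)\Delta_r(a)$ and $(\Delta_r\ot\id)\Delta_r(a)$ directly from the principal formula $\Delta_r(a)=a\cdot r-r\cdot a-\lambda(a\ot 1)$, and then compare them. First I would expand $\Delta_r(a)$ using the bimodule action from Eq.~(\ref{eq:dota}): writing $r=\sum_i u_i\ot v_i$, one gets $\Delta_r(a)=\sum_i(au_i\ot v_i-u_i\ot v_i a)-\lambda(a\ot 1)$. Then I would apply $\Delta_r$ in the first tensor slot (for $\id\ot\Delta_r$, apply it in the second slot) to each summand. The key bookkeeping device is that $\Delta_r$ acting in a given slot on a simple tensor replaces a single entry $x$ by $\sum_j(xu_j\ot v_j-u_j\ot v_j x)-\lambda(x\ot 1)$, inserting one new tensor factor. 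Organizing these expansions in terms of the three-fold products $r_{12}, r_{13}, r_{23}$ is the crucial translation step.

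The main computation is to show that the difference
\begin{align*}
(\id\ot\Delta_r)\Delta_r(a)-(\Delta_r\ot\id)\Delta_r(a)
\end{align*}
equals precisely $a\cdot \Theta-\Theta\cdot a$, where $\Theta:=r_{13}r_{12}-r_{12}r_{23}+r_{23}r_{13}-\lambda r_{13}$, with the bimodule action of $A$ on $A\ot A\ot A$ as specified in the excerpt. I would carry out the two expansions carefully, collecting the quadratic-in-$r$ terms (which assemble into the products $r_{13}r_{12}$, $r_{12}r_{23}$, $r_{23}r_{13}$) and the linear-in-$\lambda$ terms (which assemble into $\lambda r_{13}$ and the various $\lambda(a\ot 1)$-type boundary terms). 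The terms not involving $a$ on either the far left or far right must cancel between the two sides; this cancellation is what forces the answer to take the commutator form $a\cdot\Theta-\Theta\cdot a$.

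Once the identity $(\id\ot\Delta_r)\Delta_r(a)-(\Delta_r\ot\id)\Delta_r(a)=a\cdot\Theta-\Theta\cdot a$ is established for all $a\in A$, the theorem follows immediately: coassociativity means the left-hand side vanishes for every $a$, which is exactly the statement that $a\cdot\Theta=\Theta\cdot a$ for all $a$, i.e.\ that $\Theta$ is $A$-invariant. Here I would use that $A$ is unitary so that testing against all $a$ (including interplay with the unit $1$) genuinely recovers $A$-invariance and there is no loss from the bimodule action being degenerate.

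The hard part will be the careful combinatorial matching in the middle computation: keeping track of which tensor slot each factor lands in, ensuring the $\lambda$-linear terms combine correctly into $-\lambda r_{13}$ rather than some other $r_{ij}$, and verifying that all the genuinely "interior" terms (those with no free $a$ on the outside) cancel so that only the commutator $a\cdot\Theta-\Theta\cdot a$ survives. I expect the $\lambda$-dependent terms to be the most error-prone, since the inhomogeneous piece $-\lambda(a\ot 1)$ interacts with the insertion of new tensor factors in a way that must land exactly on the $r_{13}$ component; I would double-check this by specializing to $\lambda=0$ and confirming agreement with Aguiar's classical identity in \cite[Section~5]{MA}.
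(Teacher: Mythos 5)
Your proposal follows essentially the same route as the paper's own proof: expand $(\Delta_r\ot\id)\Delta_r(a)$ and $(\id\ot\Delta_r)\Delta_r(a)$ directly from $\Delta_r(a)=a\cdot r-r\cdot a-\lambda(a\ot 1)$, cancel the terms common to both sides (in the paper these are $r_{12}(1\ot a\ot 1)r_{23}$, $\lambda r_{12}(1\ot a\ot 1)$, $\lambda a\cdot r_{12}$ and $\lambda^2(a\ot 1\ot 1)$), and recognize the surviving difference as the commutator of $a$ with $\Theta=r_{13}r_{12}-r_{12}r_{23}+r_{23}r_{13}-\lambda r_{13}$, from which the equivalence with $A$-invariance is immediate. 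The only slip is a sign: with your orientation, $(\id\ot\Delta_r)\Delta_r(a)-(\Delta_r\ot\id)\Delta_r(a)=\Theta\cdot a-a\cdot\Theta$ rather than $a\cdot\Theta-\Theta\cdot a$, which is immaterial to the conclusion.
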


\begin{proof}
On the one hand,
\begin{align*}
(\Delta_{r}\ot \id)\circ \Delta_{r}(a)=&(\Delta_{r}\ot \id) (a\cdot r-r\cdot a-\lambda a\ot 1)\quad (\text{by Eq.~(\ref{eq:prin})})\\
=&(\Delta_{r}\ot \id)\left(\sum_{i}\Big(au_i\ot v_i-u_i\ot v_ia\Big)-\lambda a\ot 1\right)\\
=&\sum_{i}\Delta_{r}(au_i)\ot v_i-\sum_{i}\Delta_{r}(u_i)\ot v_ia-\lambda \Delta_{r}(a)\ot 1\\
=&\sum_{i}\Big(au_i\cdot r-r\cdot au_i-\lambda au_i\ot 1\Big)\ot v_i-\sum_{i}\Big(u_i\cdot r-r\cdot u_i-\lambda u_i\ot 1\Big)\ot v_ia\\
&- \lambda(a\cdot r-r\cdot a-\lambda a\ot 1) \ot 1 \quad (\text{by Eq.~(\ref{eq:prin})})\\
=&\sum_{i, j}\Big(au_iu_j\ot v_j\ot v_i-u_j\ot v_jau_i\ot v_i\Big)-\lambda \sum_{i}au_i\ot 1\ot v_i\\
&-\sum_{i, j}\Big(u_iu_j\ot v_j\ot v_ia-u_j\ot v_ju_i\ot v_ia\Big)+\lambda \sum_{i}u_i\ot 1\ot v_ia\\
&-\lambda \sum_{i}\Big(au_i\ot v_i\ot 1- u_i\ot v_ia \ot 1\Big)+\lambda^2(a\ot 1\ot 1)\\
=&a\cdot r_{13}r_{12}-r_{12}(1\ot a\ot 1)r_{23}-\lambda a\cdot r_{13}-r_{13}r_{12}\cdot a+r_{12}r_{23}\cdot a+\lambda r_{13}\cdot a\\
&-\lambda a\cdot r_{12}+\lambda r_{12} (1\ot a\ot 1)+\lambda^2(a\ot 1\ot 1).
\end{align*}
On the other hand,
\begin{align*}
(\id\ot \Delta_{r})\circ \Delta_{r}(a)=&(\id\ot \Delta_{r}) (a\cdot r-r\cdot a-\lambda a\ot 1)\quad (\text{by Eq.~(\ref{eq:prin})})\\
=&( \id\ot \Delta_{r})\left(\sum_{i}\Big(au_i\ot v_i-u_i\ot v_ia\Big)-\lambda a\ot 1\right)\\
=&\sum_{i}au_i\ot\Delta_{r}( v_i)-\sum_{i}u_i\ot \Delta_{r}(v_ia)-\lambda a\ot \Delta_{r}(1)\\
=&\sum_{i}au_i\ot \Big(v_i\cdot r-r\cdot v_i-\lambda v_i\ot 1\Big)-\sum_{i}u_i\ot \Big(v_i a\cdot r-r\cdot v_i a-\lambda v_i a\ot 1\Big)\\
&+\lambda^2 a\ot 1\ot 1 \quad (\text{by Eq.~(\ref{eq:prin})})\\
=&\sum_{i, j}\Big(au_i\ot v_iu_j\ot v_j-au_i\ot u_j\ot v_jv_i\Big)-\lambda \sum_{i}au_i\ot v_i \ot 1\\
&-\sum_{i, j}\Big(u_i\ot v_iau_j\ot v_j-u_i\ot u_j\ot v_jv_ia\Big)+\lambda \sum_{i}u_i\ot v_ia\ot 1+\lambda^2 (a\ot 1\ot 1)\\
=&a\cdot r_{12}r_{23}-a\cdot r_{23}r_{13}-\lambda a\cdot r_{12}-r_{12}(1\ot a\ot 1)r_{23}+r_{23}r_{13}\cdot a+\lambda r_{12}(1\ot a\ot 1)\\
&+\lambda^2 (a\ot 1\ot 1).
\end{align*}
Comparing the two hands, $\Delta_r$ is coassociative if and only if
\begin{align*}
a\cdot r_{13}r_{12}-\lambda a\cdot r_{13}-r_{13}r_{12}\cdot a+r_{12}r_{23}\cdot a+\lambda r_{13}\cdot a
=a\cdot r_{12}r_{23}-a\cdot r_{23}r_{13}+r_{23}r_{13}\cdot a
\end{align*}
if and only if
\begin{align*}
a\cdot (r_{13}r_{12}- r_{12}r_{23}+ r_{23}r_{13}-\lambda r_{13})
=(r_{13}r_{12}-r_{12}r_{23}+r_{23}r_{13}-\lambda r_{13})\cdot a
\end{align*}
if and only if
$$r_{13}r_{12}-r_{12}r_{23}+r_{23}r_{13}-\lambda r_{13}$$
is $A$-invariant.
\end{proof}

Now we arrive at our main result in this subsection.

\begin{theorem}
Let $(A, m, 1)$ be a unitary algebra and $r\in A\ot A$.
If $r$ is a solution of an AYBE of weight $\lambda$ in $A$, then the quadruple $(A, m, 1, \Delta_r)$ is an $\epsilon$-unitary bialgebra of weight $\lambda$.
\mlabel{thm:mainiff}
\end{theorem}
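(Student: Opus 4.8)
The plan is to assemble Theorem~\mref{thm:mainiff} from the two ingredients already in hand: Proposition~\mref{prop:weid}, which tells us that $\Delta_r$ is a derivation of weight $\lambda$ (i.e.\ satisfies the cocycle identity Eq.~(\mref{eq:cocycle})), and Theorem~\mref{thm:iff}, which characterizes coassociativity of $\Delta_r$. Recall that to have an $\epsilon$-unitary bialgebra of weight $\lambda$ we need exactly three things: $(A,m,1)$ is a unitary algebra (given by hypothesis), $(A,\Delta_r)$ is a coalgebra (i.e.\ $\Delta_r$ is coassociative), and the compatibility Eq.~(\mref{eq:cocycle}) holds. The derivation property supplies the compatibility immediately, so the only real content is coassociativity.

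\smallskip
First I would invoke Proposition~\mref{prop:weid} to record that $\Delta_r(ab)=a\cdot\Delta_r(b)+\Delta_r(a)\cdot b+\lambda(a\ot b)$ holds for all $a,b\in A$; this is precisely the defining compatibility Eq.~(\mref{eq:cocycle}) of an $\epsilon$-bialgebra of weight $\lambda$. Next I would use the hypothesis that $r$ solves the AYBE of weight $\lambda$, namely Eq.~(\mref{eq:AYBE}): $r_{13}r_{12}-r_{12}r_{23}+r_{23}r_{13}=\lambda r_{13}$. Rearranging gives
\begin{align*}
r_{13}r_{12}-r_{12}r_{23}+r_{23}r_{13}-\lambda r_{13}=0\in A\ot A\ot A.
\end{align*}
Since the zero element of the $A$-bimodule $A\ot A\ot A$ is trivially $A$-invariant (indeed $a\cdot 0=0=0\cdot a$ for every $a\in A$), the element appearing in Theorem~\mref{thm:iff} is $A$-invariant. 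By the ``if'' direction of Theorem~\mref{thm:iff}, it follows that $\Delta_r$ is coassociative, so $(A,\Delta_r)$ is a coalgebra (without counit).

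\smallskip
Finally I would combine the pieces: $(A,m,1)$ is unitary by assumption, $(A,\Delta_r)$ is a coalgebra by the previous step, and Eq.~(\mref{eq:cocycle}) holds by Proposition~\mref{prop:weid}. By Definition~\mref{def:iub}, the quadruple $(A,m,1,\Delta_r)$ is therefore an $\epsilon$-unitary bialgebra of weight $\lambda$, completing the proof.

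\smallskip
\emph{Expected difficulty.} There is essentially no obstacle here — all the hard work has already been done. Theorem~\mref{thm:iff} packages the lengthy coassociativity computation, and Proposition~\mref{prop:weid} packages the derivation check; this statement is just the convenient specialization where the $A$-invariant element is forced to be zero by the AYBE. The only point requiring the slightest care is the observation that $0$ is $A$-invariant, which lets us apply Theorem~\mref{thm:iff} directly rather than re-deriving anything.
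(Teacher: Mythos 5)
Your proposal is correct and matches the paper's own proof essentially verbatim: the paper likewise invokes Theorem~\ref{thm:iff} for coassociativity, Proposition~\ref{prop:weid} for the weight-$\lambda$ compatibility, and concludes by Definition~\ref{def:iub}. Your only addition is to spell out explicitly that the AYBE makes the relevant element zero and that $0$ is trivially $A$-invariant, a small point the paper leaves implicit.
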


\begin{proof}
Suppose that $r$  is a solution of an AYBE of weight $\lambda$ in $A$.
By Theorem~\mref{thm:iff}, $\Delta_r: A \to A\ot A$ is coassociative. Further by Proposition~~\mref{prop:weid},
\begin{align*}
    \Delta_r(ab) =a\cdot \Delta_r(b)+\Delta_r (a)\cdot b+\lambda(a\ot b) \, \text{ for }\, a, b\in A.
    \end{align*}
Thus the result holds by Definition~\mref{def:iub}.
\end{proof}

\subsection{A solution of a homogeneous AYBE for matrix algebras}
In this subsection, we give a solution of a homogeneous AYBE for matrix algebras.
\begin{theorem}
Let $L$ be a matrix of $M_n(\bfk)$ such that $L^2=0$. Then $r=L\ot L$ is a solution of a homogeneous AYBE for matrix algebras.
\mlabel{thm:solu}
\end{theorem}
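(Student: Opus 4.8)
The plan is to verify directly that $r = L \otimes L$ satisfies the homogeneous AYBE, namely
\begin{align*}
r_{13}r_{12} - r_{12}r_{23} + r_{23}r_{13} = 0
\end{align*}
in $M_n(\bfk) \otimes M_n(\bfk) \otimes M_n(\bfk)$. First I would write out the three tensor components explicitly using the definitions from the start of this section: with $r = L \otimes L$ we have $r_{12} = L \otimes L \otimes E$, $r_{13} = L \otimes E \otimes L$, and $r_{23} = E \otimes L \otimes L$, where $E$ is the identity matrix. The multiplication on the triple tensor product is componentwise, so each product $r_{ij}r_{k\ell}$ is computed slot by slot.

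Next I would evaluate each of the three summands in turn. Computing $r_{13}r_{12} = (L \otimes E \otimes L)(L \otimes L \otimes E) = L^2 \otimes L \otimes L$; then $r_{12}r_{23} = (L \otimes L \otimes E)(E \otimes L \otimes L) = L \otimes L^2 \otimes L$; and finally $r_{23}r_{13} = (E \otimes L \otimes L)(L \otimes E \otimes L) = L \otimes L \otimes L^2$. The key point to carry through is that in each of these three terms exactly one tensor slot carries the factor $L^2$.

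The heart of the argument — and it is essentially immediate rather than a real obstacle — is the hypothesis $L^2 = 0$. Since each summand contains a factor of $L^2$ in one of its slots, every term vanishes individually, and therefore their alternating sum is $0$. Thus $r_{13}r_{12} - r_{12}r_{23} + r_{23}r_{13} = 0$, so $r = L \otimes L$ is a solution of the homogeneous AYBE. I expect no genuine difficulty here; the only thing to be careful about is correctly tracking which slot receives the product $L^2$ in each of the three terms, which follows mechanically from the slot-wise multiplication rule.
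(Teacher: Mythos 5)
Your proposal is correct and follows essentially the same route as the paper: compute the three products slot-wise, observe that $r_{13}r_{12}=L^2\ot L\ot L$, $r_{12}r_{23}=L\ot L^2\ot L$, and $r_{23}r_{13}=L\ot L\ot L^2$, and conclude each vanishes from $L^2=0$. The only cosmetic difference is that you write the identity matrix $E$ where the paper writes $1$ in the tensor factors.
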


\begin{proof}
Since $L^2=0$ and $r=L\ot L$, we have
\begin{align*}
r_{13}r_{12}&=(L\ot 1\ot L)(L\ot L \ot 1)=L^2\ot L\ot L=0,\\
r_{12}r_{23}&=(L\ot L \ot 1)(1\ot L \ot L)=L\ot L^2\ot L=0,\\
r_{23}r_{13}&=(1\ot L\ot L)(L\ot 1\ot L)=L\ot L\ot L^2=0,
\end{align*}
and so
\begin{align*}
r_{13}r_{12}-r_{12}r_{23}+r_{23}r_{13}=0.
\end{align*}
This completes the proof.
\end{proof}

The following result gives another way to construct an $\epsilon$-unitary bialgebra of weight zero on the matrix algebra $M_{n}(\bfk)$ obtained in Subsection~\mref{sec:sub}.

\begin{coro}
Let $L\in M_n(\bfk)$ such that $L^2=0$ and $r=L\ot L\in M_n(\bfk)\ot M_n(\bfk)$.  Define the linear map $\Delta_r: M_n(\bfk)\rightarrow M_n(\bfk)\ot M_n(\bfk)$ by setting
\begin{align*}
\Delta_r(M):=M\cdot r-r\cdot M \, \text{ for }\, M\in M_n(\bfk).
\end{align*}
 Then the quadruple $(M_n(\bfk), \frakm, E,\col)$ is an $\epsilon$-unitary bialgebra of weight zero.
\mlabel{coro:iub}
\end{coro}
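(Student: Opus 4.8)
The plan is to recognize this corollary as an immediate consequence of Theorems~\ref{thm:solu} and~\ref{thm:mainiff}, once we identify the principal derivation $\Delta_r$ with the coproduct $\col$ of Eq.~(\ref{eq:col}). Everything reduces to a short bookkeeping computation plus two citations.

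First I would compute $\Delta_r$ explicitly using the $A$-bimodule structure on $M_n(\bfk)\ot M_n(\bfk)$ recorded in Eq.~(\ref{eq:dota}). Since $r=L\ot L$, the left action gives $M\cdot r=ML\ot L$ and the right action gives $r\cdot M=L\ot LM$, so that
\begin{align*}
\Delta_r(M)=M\cdot r-r\cdot M=ML\ot L-L\ot LM.
\end{align*}
Comparing with the definition of $\col$ in Eq.~(\ref{eq:col}), this is exactly $\col(M)$; hence $\Delta_r=\col$ as linear maps on $M_n(\bfk)$.

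Next I would invoke Theorem~\ref{thm:solu}: because $L^2=0$, the element $r=L\ot L$ is a solution of the homogeneous associative Yang-Baxter equation, i.e.\ of the AYBE of weight zero. Theorem~\ref{thm:mainiff}, applied with $\lambda=0$, then guarantees that the quadruple $(M_n(\bfk),\frakm,E,\Delta_r)$ is an $\epsilon$-unitary bialgebra of weight zero. Substituting the identity $\Delta_r=\col$ obtained in the first step yields the claimed structure on $(M_n(\bfk),\frakm,E,\col)$, which completes the proof.

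The only place demanding any care is the first step: one must respect the asymmetry of Eq.~(\ref{eq:dota}), so that $M\cdot(L\ot L)$ acts on the \emph{left} of the first tensor factor while $(L\ot L)\cdot M$ acts on the \emph{right} of the second factor. Once the identification $\Delta_r=\col$ is secured, no further work is needed, since coassociativity and the weight-zero derivation property are already delivered by the two cited theorems rather than re-verified directly. This also provides, as the statement notes, a second and more conceptual route to the bialgebra of Theorem~\ref{thm:fma}.
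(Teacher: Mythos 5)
Your proof is correct and follows essentially the same route as the paper, whose proof is the one-line citation of Theorems~\ref{thm:mainiff} and~\ref{thm:solu}. Your explicit verification that $\Delta_r(M)=ML\ot L-L\ot LM=\col(M)$ is a welcome addition, since the paper leaves this identification (needed to pass from the quadruple $(M_n(\bfk),\frakm,E,\Delta_r)$ to the stated $(M_n(\bfk),\frakm,E,\col)$) implicit.
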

\begin{proof}
It follows from Theorems~\mref{thm:mainiff} and \mref{thm:solu}.
\end{proof}

\subsection{Weighted AYBEs and Rota-Baxter operators}
In this subsection, we derive a Rota-Baxter operator of weight $-\lambda$  from an AYBE of weight $\lambda$, which generalizes the result studied in~\mcite{Agu00}. See also~\cite[Theorem~1.3]{BGN12}.

\begin{defn}\cite{Bax60, Gub}
Let $A$ be a unitary algebra and $\lambda$ a given element of $\bfk$. A linear operator $P: A\rightarrow A$ is called a {\bf Rota-Baxter operator of weight $\lambda$} if it satisfies the Rota-Baxter equation
\begin{align}
P(a)P(b)=P(aP(b))+P(P(a)b)+\lambda P(ab)\, \text{ for }\, a,b\in A.
\mlabel{eq:RBal}
\end{align}
Then  the pair $(A, P)$ is called a {\bf Rota-Baxter algebra of weight $\lambda$}.
\end{defn}

The following result captures the relation between an AYBE of weight $\lambda$ and a Rota-Baxter operator of weight $-\lambda$.
\begin{theorem}
Let $r=\sum_{i}u_i\ot v_i$ be a solution of an  AYBE of weight $\lambda$ in $A$. Then the linear operator
\begin{align}
P_r: A\rightarrow A, \quad a\mapsto P_r(a):=\sum_{i}u_i av_i
\mlabel{eq:RBidd}
\end{align}
is a Rota-Baxter operator of weight $-\lambda$.
\mlabel{thm:RB2}
\end{theorem}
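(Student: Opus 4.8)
The plan is to verify the Rota-Baxter identity~\eqref{eq:RBal} of weight $-\lambda$ directly by expanding both sides of
$$P_r(a)P_r(b)=P_r(aP_r(b))+P_r(P_r(a)b)-\lambda P_r(ab)$$
in terms of the defining sum $P_r(a)=\sum_i u_i a v_i$, and then recognizing the resulting expression as an incarnation of the weighted AYBE~\eqref{eq:AYBE}. First I would compute the left side: using double indices,
$$P_r(a)P_r(b)=\Big(\sum_i u_i a v_i\Big)\Big(\sum_j u_j b v_j\Big)=\sum_{i,j}u_i a v_i u_j b v_j.$$
Next I would expand the two composite terms on the right: $P_r(aP_r(b))=\sum_{i,j}u_i a u_j b v_j v_i$ and $P_r(P_r(a)b)=\sum_{i,j}u_i u_j a v_j b v_i$, and also $\lambda P_r(ab)=\lambda\sum_i u_i ab v_i$. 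The goal is to show the difference between the left side and these three terms vanishes.

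The key idea is that each of these four expressions, being a sum of words of the form $u_? \,a\, v_? \,b\, v_?$ or similar, can be read off from one of the tensor components $r_{13}r_{12}$, $r_{12}r_{23}$, $r_{23}r_{13}$, or $\lambda r_{13}$ by inserting $a$ and $b$ into the three tensor slots and then multiplying the entries together in order. Concretely, I would introduce the multiplication map that sends $x\ot y\ot z\mapsto x\,a\,y\,b\,z$ (or the appropriate slotted version), apply it to both sides of the weighted AYBE $r_{13}r_{12}-r_{12}r_{23}+r_{23}r_{13}=\lambda r_{13}$, and match each resulting summand with one of the four expressions above. Since the weighted AYBE holds in $A\ot A\ot A$ by hypothesis that $r$ is a solution, applying any $\bfk$-linear ``evaluation'' map to it preserves the equality, yielding precisely the weight $-\lambda$ Rota-Baxter relation.

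The main obstacle, and the step requiring the most care, is getting the bookkeeping of indices and the order of the factors exactly right, since the entries $u_i,v_i$ live in a noncommutative algebra and the placement of $a$ and $b$ among them is dictated by which tensor slot carries the $1$ in each of $r_{12},r_{13},r_{23}$. In particular one must check that the sign and the coefficient $\lambda$ on the term $\lambda r_{13}$ translate into the $-\lambda P_r(ab)$ term with the correct sign; this is the origin of the weight being $-\lambda$ rather than $+\lambda$. Once the evaluation map is fixed and applied consistently, the verification reduces to term-by-term matching and is routine.
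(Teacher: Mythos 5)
Your proposal is correct and follows essentially the same route as the paper: the paper likewise introduces the linear evaluation map $h\colon A\ot A\ot A\to A$, $u\ot v\ot w\mapsto uxvyw$, applies it to the weighted AYBE $r_{13}r_{12}-r_{12}r_{23}+r_{23}r_{13}=\lambda r_{13}$ written out in the indices $i,j$, and reads off $P_r(P_r(x)y)-P_r(x)P_r(y)+P_r(xP_r(y))=\lambda P_r(xy)$, which rearranges to the weight $-\lambda$ Rota-Baxter relation. Your four term-by-term expansions (including the sign bookkeeping explaining why the weight is $-\lambda$) match the paper's computation exactly.
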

\begin{proof}
Define a $\mathbf{k}$-trilinear map

$$ p: A \times A \times A \to     A,\quad   (u,v, w)
  \mapsto uxvyw \, \text{ for }\, u, v, w, x, y \in A, $$
which induces a $\mathbf{k}$-linear map
$$
      h
  \colon  A \ot A \ot A
  \to     A,
  \quad   u \ot v \ot w
  \mapsto uxvyw.
$$
Since $r=\sum_{i}u_i\ot v_i$ is a solution of a  AYBE of weigh $\lambda$,  we have
\begin{align*}
r_{13}r_{12}-r_{12}r_{23}+r_{23}r_{13}=\lambda r_{13},
\end{align*}
that is,
\begin{align}
\sum_{i,j}u_iu_j\ot v_j\ot v_i-\sum_{i,j}u_i\ot v_iu_j\ot v_j+\sum_{i,j}u_j\ot u_i\ot v_iv_j=\lambda \sum_{i}u_i\ot 1\ot v_i.\label{eq:AYBE2}
\end{align}
Applying $h$ on both sides of Eq.~(\mref{eq:AYBE2}),
\begin{align*}
\sum_{i,j}u_iu_jxv_jy v_i-\sum_{i,j}u_ix v_iu_jy v_j+\sum_{i,j}u_jx u_iy v_iv_j=\lambda \sum_{i}u_ix y v_i.
\end{align*}
Thus
\begin{align*}
P_r(P_r(x)y)-P_r(x)P_r(y)+P_r(xP_r(y))=\lambda P_r(xy),
\end{align*}
that is,
\begin{align*}
P_r(x)P_r(y)=P_r(P_r(x)y)+P_r(xP_r(y))-\lambda P_r(xy),
\end{align*}
which implies that $P_r$ is a  Rota-Baxter operator of weight $-\lambda$.
\end{proof}

\begin{coro}
Let $A$ be a unitary algebra.
\begin{enumerate}
\item ~\mcite{MA}
If $r=\sum_{i}u_i\ot v_i$ be a solution of a homogeneous AYBE of in $A$, then the linear operator
$P: A\rightarrow A$ given by Eq.~(\mref{eq:RBidd})
is a Rota-Baxter operator of weight zero.
\item ~\mcite{EF02}
If $r=\sum_{i}u_i\ot v_i$ be a solution of a modified AYBE of in $A$, then the linear operator
$P: A\rightarrow A$ given by Eq.~(\mref{eq:RBidd})
is a Rota-Baxter operator of weight $-1$.
\end{enumerate}
\end{coro}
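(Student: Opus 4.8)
The plan is to deduce both parts directly from Theorem~\mref{thm:RB2}, which already establishes the substantive fact: if $r = \sum_i u_i \ot v_i$ is a solution of an AYBE of weight $\lambda$ in a unitary algebra $A$, then the operator $P_r$ of Eq.~(\mref{eq:RBidd}), namely $P_r(a) = \sum_i u_i a v_i$, is a Rota-Baxter operator of weight $-\lambda$. No fresh computation is needed here; I only have to read off the weight $\lambda$ attached to each named equation and transport it through the correspondence $\lambda \mapsto -\lambda$ supplied by that theorem.

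First, for part (a), I would invoke Remark~\mref{rem:3ex}, where a \emph{homogeneous} AYBE is identified with the AYBE of weight $\lambda = 0$. Applying Theorem~\mref{thm:RB2} with $\lambda = 0$ then yields that $P_r$ is a Rota-Baxter operator of weight $-\lambda = 0$, recovering Aguiar's original statement~\mcite{MA}. Second, for part (b), Remark~\mref{rem:3ex} identifies a \emph{modified} AYBE with the AYBE of weight $\lambda = -1$; feeding this value into Theorem~\mref{thm:RB2} shows that $P_r$ is a Rota-Baxter operator of weight $-\lambda$, which is the Ebrahimi-Fard correspondence~\mcite{EF02}. In each case the argument is a single substitution of a fixed scalar into the already-proved theorem.

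The only delicate point, and the closest thing to an obstacle in an otherwise immediate corollary, is the sign bookkeeping. I would take care that the weight convention placed on the right-hand side of the AYBE in Definition~\mref{def:WAYBE} is matched consistently against the weight appearing in the Rota-Baxter identity Eq.~(\mref{eq:RBal}), so that the negation $\lambda \mapsto -\lambda$ in Theorem~\mref{thm:RB2} is applied with the correct sign in each specialization; this is precisely the bookkeeping already encoded in Remark~\mref{rem:3ex}. Once those conventions are aligned, both parts follow at once with no further work.
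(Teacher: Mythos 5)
Your proposal is correct and coincides exactly with the paper's own proof, which likewise disposes of both parts in one line by specializing Theorem~\ref{thm:RB2} to $\lambda=0$ and $\lambda=-1$. One remark on the ``sign bookkeeping'' you flag but leave unevaluated: plugging $\lambda=-1$ into Theorem~\ref{thm:RB2} literally yields weight $-\lambda=+1$, not the weight $-1$ asserted in part (b), so there is a genuine sign discrepancy between the corollary's statement and the conventions of Definition~\ref{def:WAYBE} and Remark~\ref{rem:3ex} --- a slip that the paper's one-line proof glosses over in precisely the same way as yours.
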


\begin{proof}
It follows from Theorem~\mref{thm:RB2} by taking $\lambda=0$ and $\lambda=-1$, respectively.
\end{proof}

\subsection{A bijection between the solutions of weighted AYBEs and Rota-Baxter operators}
In this subsection, we shall give a bijection between the solutions of weighted AYBEs and Rota-Baxter operators on matrix algebra $M_{n}(\bfk)$, which generalize the results studied in~\cite[Section 3]{Gub18}.

Let $E_{i j}\in M_{n}(\bfk)$, $1\leq i, j\leq n$, be the matrix whose entry in the $i$-th row, $j$-th column is $1$,
and zero in all other entries. Note that $E_{ij}E_{kl}=\delta_{jk}E_{il}$ and $E_{i j}$, $1\leq i, j\leq n$, are
a linear basis of $M_{n}(\bfk)$. Recall that $P_r$ is defined in Eq.~(\mref{eq:RBidd}).

\begin{theorem}
Let $r$ be a solution of an  AYBE of weight $\lambda$ in $M_{n}(\bfk)$. Then the map $r\rightarrow P_r$ is a bijection between the set of the solutions of AYBE of weight $\lambda$ in $M_{n}(\bfk)$ and the set of Rota-Baxter operators of weight $-\lambda$ on $M_{n}(\bfk)$.
\mlabel{thm:bijection}
\end{theorem}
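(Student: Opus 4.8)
The plan is to establish that the map $r \mapsto P_r$ given by Eq.~(\mref{eq:RBidd}) is a bijection by exhibiting an explicit two-sided inverse. Theorem~\mref{thm:RB2} already guarantees that $r \mapsto P_r$ is a well-defined map from the set of solutions of the AYBE of weight $\lambda$ to the set of Rota-Baxter operators of weight $-\lambda$; what remains is injectivity and surjectivity, both of which I would handle at once by constructing a candidate inverse. Since the $E_{ij}$ form a linear basis of $M_n(\bfk)$ with the convenient product rule $E_{ij}E_{kl} = \delta_{jk} E_{il}$, the natural guess is that an arbitrary linear operator $P: M_n(\bfk) \to M_n(\bfk)$ should be recovered from the tensor $r_P := \sum_{i,j} E_{ij} \ot P(E_{ji})$ (or a similarly indexed expression), so that the assignment $P \mapsto r_P$ becomes the proposed inverse.

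First I would compute $P_{r_P}$ for the candidate tensor and verify, using $E_{ij} M E_{ji}$-type evaluations together with the basis product rule, that $P_{r_P} = P$ for every linear $P$; this uses only the definition in Eq.~(\mref{eq:RBidd}) and linearity, and establishes surjectivity of $r \mapsto P_r$ onto all linear operators. Conversely I would compute $r_{P_r}$ and check that it returns the original $r$, which gives injectivity. The key algebraic identity driving both computations is that evaluating $P_r$ on the matrix units $E_{kl}$ recovers the coefficients of $r$ in the $E_{ij} \ot E_{kl}$ basis, so the two maps genuinely undo each other at the level of coordinates. Once this bijection between \emph{all} solutions-as-tensors and \emph{all} linear operators is set up, I would invoke Theorem~\mref{thm:RB2} to see that $r \mapsto P_r$ sends solutions of the weighted AYBE into Rota-Baxter operators of weight $-\lambda$.

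The delicate point, and the place I expect the real work to lie, is the matching of the two defining conditions: I must show that $r$ satisfies the AYBE of weight $\lambda$ \emph{if and only if} $P_r$ satisfies the Rota-Baxter equation of weight $-\lambda$, not merely the forward implication supplied by Theorem~\mref{thm:RB2}. The forward direction is already done; for the reverse direction I would run the computation in the proof of Theorem~\mref{thm:RB2} backwards, observing that the map $h$ there (the substitution $u \ot v \ot w \mapsto u x v y w$) is, on the matrix algebra, injective enough after summing against all matrix units that the Rota-Baxter identity for $P_r$ forces the AYBE identity for $r$ term by term. Concretely, because the $E_{ij}$ separate points, equality of the operators $P_r(x)P_r(y)$ and $P_r(P_r(x)y)+P_r(xP_r(y)) - \lambda P_r(xy)$ for all $x,y$ is equivalent to the equality of the corresponding tensors in $M_n(\bfk)^{\ot 3}$, which is precisely Eq.~(\mref{eq:AYBE2}). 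This equivalence of the two constraints, transported across the already-constructed linear bijection, upgrades the correspondence between all tensors and all operators to the desired bijection between solutions of the weighted AYBE and Rota-Baxter operators of weight $-\lambda$.

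The main obstacle, then, is verifying this reverse implication cleanly: one has to argue that the map $h$ does not collapse distinct tensors when its input ranges over the matrix units, so that no information about $r$ is lost in passing to $P_r$. I would address this by choosing the evaluation points $x = E_{ab}$, $y = E_{cd}$ judiciously and reading off each structure coefficient of $r$ from the values $P_r(E_{kl})$, thereby confirming that the coordinate data of $r$ and of $P_r$ determine one another and that the two weighted equations are coordinate-wise identical.
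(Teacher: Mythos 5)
Your overall strategy is sound and, at bottom, the same as the paper's: both arguments exploit the matrix units $E_{ij}$ to show that the coordinate data of $r$ and of $P_r$ determine one another, and that the AYBE of weight $\lambda$ for $r$ and the Rota-Baxter equation of weight $-\lambda$ for $P_r$ become the identical system of equations on the coefficients $t_{ij}^{kl}$. The paper does this by direct expansion: it writes a general $P$ and a general $r$ in coordinates, derives the coordinate identities equivalent to each of the two equations, and checks that these coincide up to a permutation of indices (with a case split according to whether the relevant indices collide). Your organization --- first an ambient linear bijection $M_n(\bfk)^{\ot 2}\cong \End_{\bfk}(M_n(\bfk))$, $r\mapsto P_r$, and then an equivalence of the two constraints transported across it --- is a cleaner packaging of the same computation, and your key point for the reverse implication (the one direction not supplied by Theorem~\mref{thm:RB2}) is correct: the family of evaluations $T\mapsto h_{x,y}(T)$, where $h_{x,y}(u\ot v\ot w)=uxvyw$ and $x=E_{ab}$, $y=E_{cd}$ range over matrix units, is jointly injective on $M_n(\bfk)^{\ot 3}$, because $E_{ij}\ot E_{kl}\ot E_{pq}$ is sent to the operator taking $E_{jk}\ot E_{lp}$ to $E_{iq}$ and annihilating all other basis tensors; since $\dim M_n(\bfk)^{\ot 3}=n^6=\dim \Hom_{\bfk}(M_n(\bfk)^{\ot 2},M_n(\bfk))$, no information about $r$ is lost, so the Rota-Baxter identity for $P_r$ at all pairs of matrix units forces the AYBE defect tensor of $r$ to vanish. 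This uniform argument even avoids the paper's two-case analysis.

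The one concrete error is your guessed inverse $r_P:=\sum_{i,j}E_{ij}\ot P(E_{ji})$: it fails already for $P=\id$, since then $P_{r_P}(a)=\sum_{i,j}E_{ij}\,a\,E_{ji}=\mathrm{tr}(a)\,E$, the trace map rather than the identity. The verification you planned would have exposed this, and the repair is exactly the coefficient extraction you invoke later: for $r=\sum_{i,j,k,l}t_{ij}^{kl}\,E_{ij}\ot E_{kl}$ one has $P_r(E_{pq})=\sum_{i,l}t_{ip}^{ql}E_{il}$, so the correct inverse sends $P$ to the tensor whose coefficient $t_{ip}^{ql}$ is the $(i,l)$ entry of $P(E_{pq})$; there is no single-sum closed formula of the shape you guessed. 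With this fix your proposal goes through and recovers precisely the bijection $\phi$ constructed in the paper's proof.
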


\begin{proof}
On the one hand, a linear operator
\begin{align}P:M_{n}(\bfk)\rightarrow M_{n}(\bfk),\quad E_{pq} \mapsto\sum_{i,l}t_{ip}^{ql}E_{il}, \,\text{ where }\, t_{ip}^{ql}\in \bfk \mlabel{eq:RB3}.
\end{align}
is a Rota-Baxter operator of weight -$\lambda$ on $M_{n}(\bfk)$ if and only if
\begin{align}
P(E_{ab}) P(E_{cd}) = P(P(E_{ab})E_{cd}) + P(E_{ab}P(E_{cd})) - \lambda P(E_{ab}E_{cd})\,\text{ for } \, E_{ab}, E_{cd} \in M_{n}(\bfk).
\mlabel{eq:rbpe}
\end{align}
Further, it follows from Eq.~(\mref{eq:RB3}) that
\begin{align*}
P(E_{ab})P(E_{cd})=\sum_{i,j}t_{ia}^{bj}E_{ij}\sum_{k,l}t_{kc}^{dl}E_{kl}=\sum_{i,j,l}t_{ia}^{bj}t_{jc}^{dl}E_{il}.
\end{align*}
Similarly,
\begin{align*}
P(P(E_{ab})E_{cd}) =\sum_{i,j,l}t_{ja}^{bc}t_{ij}^{dl}E_{il},\quad
P(E_{ab}P(E_{cd})) =\sum_{i,j,l}t_{bc}^{dj}t_{ia}^{jl}E_{il}
\end{align*}
and
\begin{align*}
P(E_{ab}E_{cd}) =
\begin{cases}
P(E_{ad})=\sum_{i,l}t_{ia}^{dl}E_{il} &\text{ if } b=c,\\
0 &\text{ if } b\neq c.
\end{cases}
\end{align*}
We have two cases to consider.

\noindent{\bf Case 1.} $b=c$. In this case, Eq.~(\mref{eq:rbpe}) holds if and only if
\begin{align}
\sum_{j}\left(t_{ia}^{bj}t_{jb}^{dl}-t_{ja}^{bb}t_{ij}^{dl}-t_{bb}^{dj}t_{ia}^{jl}\right)=-\lambda t_{ia}^{dl}.
\mlabel{eq:RBid2}
\end{align}

On the other hand, an element $$r=\sum_{i,j,k,l}t_{ij}^{kl}E_{ij}\ot E_{kl} \in M_{n}(\bfk)\ot M_{n}(\bfk)$$
is a solution of the AYBE of weight $\lambda$ if and only if
\begin{align}
r_{13}r_{12}-r_{12}r_{23}+r_{23}r_{13}=\lambda r_{13} \mlabel{eq:AYBEE}
\end{align}
where
\begin{align*}
r_{12}=\sum_{i,j,k,l}t_{ij}^{kl}E_{ij}\ot E_{kl}\ot 1, r_{13}=\sum_{i,j,k,l}t_{ij}^{kl}E_{ij}\ot 1 \ot E_{kl}
\, \text{ and }\,
r_{23}=\sum_{i,j,k,l}t_{ij}^{kl}1 \ot E_{ij}\ot E_{kl}.
\end{align*}
Now we have
\begin{align}\label{eq:r1312}
r_{13}r_{12}&=\sum_{i,j,k,l,p,s,t}t_{pi}^{st}t_{ij}^{kl}E_{pj}\ot E_{kl}\ot E_{st}=\sum_{i,j,k,l,p,s,t}t_{ij}^{kl}t_{pi}^{st}E_{pj}\ot E_{kl}\ot E_{st},\\ \nonumber
&=\sum_{i,j,k,l,p,s,t}t_{pj}^{kl}t_{ip}^{st}E_{ij}\ot E_{kl}\ot E_{st}\quad (\text{ by exchanging the index $i$ and $p$}).
\end{align}
Similarly,
\begin{align}
r_{12}r_{23}=\sum_{i,j,k,l,p,s,t}t_{ij}^{kl}t_{lp}^{st}E_{ij}\ot E_{kp}\ot E_{st}\, \text{ and }\,
r_{23}r_{13}=\sum_{i,j,k,l,p,q,t}t_{ij}^{kl}t_{pq}^{st}E_{pq}\ot E_{ij}\ot E_{kt}.
\mlabel{eq:r1223}
\end{align}
By substituting the summands from Eqs.~(\mref{eq:r1312}) and ~(\mref{eq:r1223}) in Eq.~(\mref{eq:AYBEE}) and gathering the coefficient of the tensor $E_{ij}\ot E_{kk}\ot E_{st}$, we obtain
\begin{align}
\sum_{p}\left(t_{ij}^{lp}t_{pl}^{st}-t_{pj}^{ll}t_{ip}^{st}-t_{ll}^{sp}t_{ij}^{pt}\right)=-\lambda t_{ij}^{st}.
\mlabel{eq:RBid3}
\end{align}
So Eq.~(\mref{eq:AYBEE}) is equivalent to Eq.~(\mref{eq:RBid3}).
Note that Eq.~(\mref{eq:RBid2}) and Eq.~(\mref{eq:RBid3}) coincide up to exchange of index by the following permutation
\begin{align*}
\left(
  \begin{array}{cccc}
    a & b & d & l \\
    j & l & s & t \\
  \end{array}
\right).
\end{align*}
In summary, a linear operator $P$ is a Rota-Baxter operator of weight $-\lambda$ on $M_n(\bfk)$ if and only if Eq.~(\mref{eq:RBid2}) holds.
An element $r$ is a solution of the AYBE of weight $\lambda$ if and only if Eq.~(\mref{eq:RBid3}) is valid.
Eq.~(\mref{eq:RBid2}) is equivalent to Eq.~(\mref{eq:RBid3}). So in this case,
there is a bijection, namely $\phi$, from the set of the solutions of AYBE of weight $\lambda$ in $M_{n}(\bfk)$
to the set of Rota-Baxter operators of weight $-\lambda$ on $M_{n}(\bfk)$.

\noindent{\bf Case 2.} $b\neq c$. In this case, $P(E_{ab}E_{cd}) = 0$ and we can only consider $P$ is a Rota-Baxter operator of weight zero on $M_n(\bfk)$.
Then, with the condition $\lambda= 0$,  Eq.~(\mref{eq:rbpe}) holds if and only if
\begin{align}
\sum_{j}\left(t_{ia}^{bj}t_{jc}^{dl}-t_{ja}^{bc}t_{ij}^{dl}-t_{bc}^{dj}t_{ia}^{jl}\right)=0.
\mlabel{eq:RBid1}
\end{align}

On the other hand, an element
$$r=\sum_{i,j,k,l}t_{ij}^{kl}E_{ij}\ot E_{kl} \in M_n(\bfk) \ot M_n(\bfk)$$ is a solution of the AYBE of weight zero in $M_{n}(\bfk)$
if and only if
\begin{align}
r_{13}r_{12}-r_{12}r_{23}+r_{23}r_{13}=0 \mlabel{eq:AYBEEE}
\end{align}
Substituting the summands from Eqs.~(\mref{eq:r1312}) and ~(\mref{eq:r1223}) in Eq.~(\mref{eq:AYBEEE}) and gathering the coefficient of the tensor $E_{ij}\ot E_{kl}\ot E_{st}$, we obtain
\begin{align}
\sum_{p}\left(t_{ij}^{kp}t_{pl}^{st}-t_{pj}^{kl}t_{ip}^{st}-t_{kl}^{sp}t_{ij}^{pt}\right)=0.
\mlabel{eq:RBid4}
\end{align}
Observe that Eq.~(\mref{eq:RBid1}) and Eq.~(\mref{eq:RBid4}) coincide up to the following permutation of indexes
\begin{align*}
\left(
  \begin{array}{ccccc}
    a & b &c& d & l \\
    j & l &l& s & t \\
  \end{array}
\right).
\end{align*}
So in this case, there is also a bijection $\phi$ from the set of the solutions of AYBE of weight zero in $M_{n}(\bfk)$
to the set of Rota-Baxter operators of weight zero on $M_{n}(\bfk)$.

Finally, the map $\phi$ acts as
\begin{align*}
\phi\left(r=\sum_{i,j,k,l}t_{ij}^{kl}E_{ij}\ot E_{kl}\right)=P \, \text{ such that }\, P(E_{pq})=\sum_{i,l}t_{ip}^{ql}E_{il}=\sum_{i,j,k,l}t_{ij}^{kl}E_{ij}E_{pq} E_{kl}=P_r(E_{pq}).
\end{align*}
So the linear operator $P$ is exactly the linear operator $P_r$ defined in Eq.~(\mref{eq:RBidd}).
Therefore, the bijection $\phi$ is precisely the map $r\rightarrow P_r$. This completes the proof.
\end{proof}

\begin{exam}
Consider the matrix algebra $M_{2}(\mathbb{C})$. Aguiar~\cite[Example 5.4.5]{MA} showed that all nonzero solutions of AYBE of weight zero in $M_{2}(\mathbb{C})$ are
\begin{align*}
r_1 = E_{12}\ot E_{12}, \, r_2 =E_{22}\ot E_{12},\, r_3 =(E_{11}+E_{22})\ot E_{12}\, \text{ and } \, r_4 =E_{11}\ot E_{12}-E_{12}\ot E_{11},
\end{align*}
up to conjugation, transpose and scalar multiple.
By Theorem~\mref{thm:bijection}, all nonzero Rota-Baxter operators (viewed as matrices) of weight zero in $M_{2}(\mathbb{C})$ up to conjugation, transpose and scalar multiple are the following:
\begin{enumerate}
\item $P_{r_1}(E_{21})=E_{12}$,  $P_{r_1}(E_{11})=P_{r_1}(E_{12})=P_{r_1}(E_{22})=0$;
\item $P_{r_2}(E_{21})=E_{22}$,  $P_{r_2}(E_{11})=P_{r_2}(E_{12})=P_{r_2}(E_{22})=0$;
\item $P_{r_3}(E_{11})=E_{12}$, $P_{r_3}(E_{21})=E_{22}$, $P_{r_3}(E_{12})= P_{r_3}(E_{22})=0$;
\item $P_{r_4}(E_{11})=E_{12}$, $P_{r_4}(E_{21})=-E_{11}$, $P_{r_4}(E_{12})=P_{r_4}(E_{22})=0$.
\end{enumerate}
See also in~\cite{BGP, Gub18, TZS14}.

\end{exam}

\section{Weighted quasitriangular $\epsilon$-unitary bialgebras and dendriform algebras}\label{sec:qua}
In this section, we introduce the concept of weighted quasitriangular $\epsilon$-unitary bialgebras, which generalize the quasitriangular $\epsilon$-bialgebras  initiated by Aguiar~\mcite{MA}. We show that any weighted quasitriangular $\epsilon$-unitary bialgebra can be made into a dendriform algebra.

\subsection{Weighted quasitriangular $\epsilon$-unitary bialgebras}
\begin{defn}
Let $(A, m, 1)$ be a unitary algebra. A {\bf quasitriangular infinitesimal unitary bialgebra} (abbreviated {\bf quasitriangular $\epsilon$-unitary bialgebra}) {\bf of weight $\lambda$} is a quadruple $(A, m, 1, r)$ consisting of a unitary algebra  $(A, m, 1)$ and a solution $r\in A\ot A$  of an associative Yang-Baxter equation of weight $\lambda$.
\mlabel{def:qua}
\end{defn}

Recall that $\Delta_{r}$ is defined in Eq.~(\mref{eq:prin}) for a $r\in A\ot A$.

\begin{remark}
\begin{enumerate}
\item By Theorem~\mref{thm:mainiff}, the quadruple $(A, m, 1,\Delta_r)$ is indeed an $\epsilon$-unitary bialgebra of weight $\lambda$.

\item A quasitriangular $\epsilon$-bialgebra studied in~\mcite{MA} is a quasitriangular $\epsilon$-unitary bialgebra of weight zero. In this case, the quadruple $(A, m, 1, \Delta_r)$ is  an $\epsilon$-unitary bialgebra of weight zero.
\end{enumerate}
\end{remark}

Strongly motivated by Aguiar~\mcite{MA}, we record some properties of weighted quasitriangular $\epsilon$-unitary bialgebras.

\begin{prop}
Let $(A, m, 1,  r)$ be a quasitriangular $\epsilon$-unitary bialgebra of weight $\lambda$ and $\Delta :=\Delta_r$. Then
\begin{align}\mlabel{eq:a}
&\Delta(a)=a\cdot r-r\cdot a-\lambda (a\ot 1),\\ \mlabel{eq:b}
&(\Delta\ot \id )(r)=-r_{23}r_{13}, \, \text{ and }\, \\ \mlabel{eq:c}
&(\id \ot \Delta)(r)=r_{13}r_{12}+\lambda (r_{13}-r_{12}).
\end{align}
Conversely, if an $\epsilon$-unitary bialgebra $(A, m, 1,\Delta)$ of weight $\lambda$ satisfies Eqs.~(\mref{eq:a}), (\mref{eq:b}) and (\mref{eq:c})
for some $r = \sum_i u_i\ot v_i \in A\ot A$, then $(A, m, 1, r)$ is a quasitriangular $\epsilon$-unitary bialgebra of weight $\lambda$ and $\Delta=\Delta_r$.
\mlabel{prop:quaiff}
\end{prop}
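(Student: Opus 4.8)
The plan is to treat the forward direction as three separate verifications and then observe that the converse is almost immediate once the forward computations are organized correctly. Equation~(\mref{eq:a}) requires no work: it is literally the defining formula~(\mref{eq:prin}) for the principal derivation $\Delta_r$, so $\Delta=\Delta_r$ by hypothesis. The content lies in~(\mref{eq:b}) and~(\mref{eq:c}), which I would obtain by expanding the two ``mixed leg'' maps applied to $r=\sum_i u_i\ot v_i$ and then invoking the AYBE of weight $\lambda$, Eq.~(\mref{eq:AYBE}).

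For~(\mref{eq:b}), first write $(\Delta\ot\id)(r)=\sum_i \Delta(u_i)\ot v_i$ and substitute $\Delta(u_i)=\sum_j u_iu_j\ot v_j-\sum_j u_j\ot v_ju_i-\lambda\, u_i\ot 1$. After relabelling summation indices, the three resulting elements of $A\ot A\ot A$ are exactly $r_{13}r_{12}$, $r_{12}r_{23}$ and $\lambda r_{13}$, giving the unconditional identity
\begin{align*}
(\Delta\ot\id)(r)=r_{13}r_{12}-r_{12}r_{23}-\lambda r_{13}.
\end{align*}
Applying Eq.~(\mref{eq:AYBE}) to the right-hand side then collapses it to $-r_{23}r_{13}$, which is~(\mref{eq:b}). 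Symmetrically, $(\id\ot\Delta)(r)=\sum_i u_i\ot\Delta(v_i)$ expands, using $\Delta(v_i)=\sum_j v_iu_j\ot v_j-\sum_j u_j\ot v_jv_i-\lambda\, v_i\ot 1$, into the unconditional identity
\begin{align*}
(\id\ot\Delta)(r)=r_{12}r_{23}-r_{23}r_{13}-\lambda r_{12},
\end{align*}
and a second application of~(\mref{eq:AYBE}) rewrites the difference $r_{12}r_{23}-r_{23}r_{13}$ in terms of $r_{13}r_{12}$, yielding the closed form~(\mref{eq:c}).

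For the converse, note that hypothesis~(\mref{eq:a}) says precisely that $\Delta$ coincides with the principal derivation $\Delta_r$ attached to the given $r$; in particular the two expansions displayed above are valid \emph{for this} $\Delta$ without assuming anything about $r$. Substituting the unconditional form of $(\Delta\ot\id)(r)$ into hypothesis~(\mref{eq:b}) gives $r_{13}r_{12}-r_{12}r_{23}-\lambda r_{13}=-r_{23}r_{13}$, i.e.\ $r_{13}r_{12}-r_{12}r_{23}+r_{23}r_{13}=\lambda r_{13}$, which is exactly the statement that $r$ solves the AYBE of weight $\lambda$. Hence $(A,m,1,r)$ is a quasitriangular $\epsilon$-unitary bialgebra of weight $\lambda$ with $\Delta=\Delta_r$, as required. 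The main obstacle is entirely bookkeeping: keeping the tensor-leg placements straight and relabelling the double sums so that each of the six product terms is correctly matched with one of $r_{13}r_{12}$, $r_{12}r_{23}$, $r_{23}r_{13}$, while tracking the three $\lambda(a\ot 1)$ contributions. The conceptual point that makes both directions short is that the expansions of $(\Delta_r\ot\id)(r)$ and $(\id\ot\Delta_r)(r)$ are identities valid for every $r$, so the AYBE is the single relation mediating the equivalence in both directions.
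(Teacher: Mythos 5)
Your proposal follows the paper's own proof essentially step for step: Eq.~(\mref{eq:a}) is just the definition~(\mref{eq:prin}) of $\Delta_r$; Eqs.~(\mref{eq:b}) and~(\mref{eq:c}) come from expanding $(\Delta\ot\id)(r)$ and $(\id\ot\Delta)(r)$ into the unconditional identities
\begin{align*}
(\Delta_r\ot\id)(r)=r_{13}r_{12}-r_{12}r_{23}-\lambda r_{13},
\qquad
(\id\ot\Delta_r)(r)=r_{12}r_{23}-r_{23}r_{13}-\lambda r_{12},
\end{align*}
followed by one application of Eq.~(\mref{eq:AYBE}); and your converse, which extracts the AYBE of weight $\lambda$ by equating the first unconditional identity with hypothesis~(\mref{eq:b}), is exactly the ``same calculation'' the paper alludes to --- in fact stated slightly more cleanly, since you make explicit that only~(\mref{eq:a}) and~(\mref{eq:b}) are needed for the converse.

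One step deserves a concrete warning, and it afflicts the paper equally: your final assertion that the AYBE substitution ``yields the closed form~(\mref{eq:c})'' does not check out as printed. Eq.~(\mref{eq:AYBE}) rearranges to $r_{12}r_{23}-r_{23}r_{13}=r_{13}r_{12}-\lambda r_{13}$, so the second unconditional identity becomes
\begin{align*}
(\id\ot\Delta_r)(r)=r_{13}r_{12}-\lambda(r_{13}+r_{12}),
\end{align*}
which differs from the stated right-hand side $r_{13}r_{12}+\lambda(r_{13}-r_{12})$ by $2\lambda r_{13}$. (At $\lambda=0$ both collapse to Aguiar's $(\id\ot\Delta)(r)=r_{13}r_{12}$, which is why the slip is invisible in the unweighted case; note also that if~(\mref{eq:b}) and~(\mref{eq:c}) as printed both held, comparing the two unconditional expansions would force $2\lambda r_{13}=0$.) The sign error therefore sits in the statement of~(\mref{eq:c}) and in the last line of the paper's own computation, not in your method: your expansions are correct, and with the corrected right-hand side your argument goes through verbatim. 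Your decision to run the converse through~(\mref{eq:b}) rather than~(\mref{eq:c}) happens to sidestep the inconsistency entirely, which is a small but genuine improvement over the paper's ``the same calculation shows'' phrasing.
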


\begin{proof}
Eq.~(\mref{eq:a}) follows directly from Eq.~(\mref{eq:prin}). Having Eq.~(\mref{eq:a}) in hand, we have
\begin{align*}
(\Delta\ot \id )(r)=&\sum_{i}\Delta(u_i)\ot v_i= \sum_{i}\Big(u_i\cdot r-r\cdot u_i-\lambda u_i\ot 1\Big)\ot v_i\\
=&\sum_{i, j}\Big(u_iu_j\ot v_j\ot v_i-u_j\ot v_ju_i\ot v_i\Big)-\lambda\sum_{i}u_i\ot 1\ot v_i\\
=&r_{13}r_{12}-r_{12}r_{23}-\lambda r_{13}=-r_{23}r_{13}  \quad (\text{by Eq.~(\ref{eq:AYBE})}).
\end{align*}
With a similar argument,
\begin{align*}
(\id \ot \Delta)(r)=&\sum_{i}u_i\ot \Delta(v_i)= \sum_{i} u_i \ot\Big(v_i\cdot r-r\cdot v_i-\lambda v_i\ot 1\Big) \\
=&\sum_{i, j}\Big (u_i\ot v_iu_j\ot v_j-u_i\ot u_j\ot v_j v_i\Big)-\lambda \sum_{i}u_i\ot v_i\ot 1\\
=&r_{12}r_{23}-r_{23}r_{13}-\lambda r_{12}=r_{13}r_{12}+\lambda (r_{13}-r_{12})\quad (\text{by Eq.~(\ref{eq:AYBE})}).
\end{align*}
Conversely, if an $\epsilon$-unitary bialgebra $(A, m, 1, \Delta)$ of weight $\lambda$ satisfies Eqs.~(\mref{eq:a}), (\mref{eq:b}) and (\mref{eq:c}) for some $r\in A\ot A$, then the same calculation shows that $r$ is a solution of an AYBE of weight $\lambda$, and so $(A, m, 1, r)$ is a quasitriangular $\epsilon$-bialgebra of weight $\lambda$. We note finally that $A\ot A$ is an $A$-bimodule by Eq.~(\mref{eq:dota}), then $\Delta=\Delta_r$ follows from Eqs.~(\mref{eq:prin}) and~(\mref{eq:a}).
\end{proof}

\subsection{Dendriform algebras from weighted quasitriangular $\epsilon$-unitary bialgebras}
In this subsection, we derive a dendriform algebra from a weighted quasitriangular $\epsilon$-bialgebra.

\begin{defn}\mcite{LR01}
A {\bf dendriform algebra} is a $\bfk$-module $D$ together with two binary operations $\prec:D\ot D \rightarrow D$ and $\succ: D\ot D \rightarrow D$ that satisfy the following relations:
\begin{align*}
(a\prec b)\prec c=&a\prec (b\prec c+b\succ c),\\
(a\succ b)\prec c=&a\succ (b\prec c),\\
a\succ (b\succ c)=&(a\prec b+a\succ b)\succ c \, \text{ for }\, a, b,c \in D .
\end{align*}
\end{defn}

We record the following lemma as a preparation.

\begin{lemma}\mcite{EF02, EG05}
Let $(A, P)$ be a Rota-Baxter algebra of weight $\lambda$. Define two binary
operations $\succ, \prec$ on $A$ by
\begin{align*}
a\succ b:=P(a)b\, \text{ and }\, a\prec b:=aP(b)+\lambda ab \, \text{ for }\, a, b\in A.
\end{align*}
Then the triple $(A, \succ, \prec )$ is a dendriform algebra.
\mlabel{lem:dend}
\end{lemma}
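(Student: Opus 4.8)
The plan is to verify the three defining relations of a dendriform algebra directly, expanding each side by means of the definitions $a\succ b=P(a)b$ and $a\prec b=aP(b)+\lambda ab$ and reducing the resulting difference to a single application of the Rota-Baxter identity of weight $\lambda$ in Eq.~(\mref{eq:RBal}). I would begin with the middle relation $(a\succ b)\prec c=a\succ(b\prec c)$, which is the cheapest: both sides expand, using only the associativity of $A$, to $P(a)\,b\,P(c)+\lambda\,P(a)\,b\,c$, so no appeal to the Rota-Baxter condition is needed here.

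For the first relation $(a\prec b)\prec c=a\prec(b\prec c+b\succ c)$, I would expand the left-hand side to $aP(b)P(c)+\lambda\,a\,b\,P(c)+\lambda\,a\,P(b)\,c+\lambda^2\,a\,b\,c$ and the right-hand side to $aP(bP(c))+aP(P(b)c)+\lambda\,a\,P(bc)+\lambda\,a\,b\,P(c)+\lambda\,a\,P(b)\,c+\lambda^2\,a\,b\,c$. After cancelling the common weight-$\lambda$ and weight-$\lambda^2$ terms, the identity collapses to $a\big(P(b)P(c)-P(bP(c))-P(P(b)c)-\lambda P(bc)\big)=0$, which is exactly Eq.~(\mref{eq:RBal}) multiplied on the left by $a$. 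The third relation $a\succ(b\succ c)=(a\prec b+a\succ b)\succ c$ is then handled symmetrically: the left-hand side equals $P(a)P(b)c$, the right-hand side equals $\big(P(aP(b))+P(P(a)b)+\lambda P(ab)\big)c$, and the two agree after one more use of Eq.~(\mref{eq:RBal}).

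There is no genuine obstacle in this argument, since each relation collapses either to plain associativity or to a single instance of Eq.~(\mref{eq:RBal}). The only point demanding care is the bookkeeping forced by the two-term definition of $\prec$, which makes the first relation produce several weight-$\lambda$ and weight-$\lambda^2$ summands that must be matched against one another before the Rota-Baxter cancellation becomes visible.
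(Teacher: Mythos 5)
Your verification is correct: all three expansions are right, the middle axiom indeed reduces to plain associativity, and the first and third each collapse to a single left- or right-multiplied instance of Eq.~(\mref{eq:RBal}). The paper itself gives no proof of this lemma, only citations to \mcite{EF02, EG05}, and your direct computation is precisely the standard argument in those references, so there is nothing to reconcile.
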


The following result captures a relationship between weighted quasitriangular $\epsilon$-unitary bialgebras  and dendriform algebras.
\begin{theorem}
Let $(A, m, 1, r)$ be a quasitriangular $\epsilon$-unitary bialgebra of weight $\lambda$ with $r=\sum_{i}u_i\ot v_i\in A\ot A$. Define  two binary operations $\succ, \prec$ on $A$ by
\begin{align*}
a\succ b:=\sum_{i}u_iav_ib \, \text { and }\, a\prec b:=\sum_{i}au_ibv_i-\lambda ab.
\end{align*}
Then the triple $(A, \succ, \prec )$ is a dendriform algebra.
\mlabel{thm:quadri}
\end{theorem}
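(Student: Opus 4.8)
The plan is to recognize the two dendriform operations as being built entirely from the Rota-Baxter operator $P_r$ already associated to $r$, and then to invoke the known passage from Rota-Baxter algebras to dendriform algebras. First I would rewrite the given operations in terms of $P_r$. Recalling from Eq.~(\ref{eq:RBidd}) that $P_r(a)=\sum_i u_i a v_i$, I observe that
\begin{align*}
a\succ b=\sum_i u_i a v_i b = \Big(\sum_i u_i a v_i\Big)b = P_r(a)b,
\end{align*}
and similarly
\begin{align*}
a\prec b=\sum_i a u_i b v_i -\lambda ab = a\Big(\sum_i u_i b v_i\Big)-\lambda ab = a\,P_r(b)-\lambda ab.
\end{align*}
Thus the operations $\succ$ and $\prec$ defined in the statement are exactly the Rota-Baxter-induced dendriform operations attached to $P_r$, with the sign on the $\lambda ab$ term being the one dictated by the weight of $P_r$.

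The second step is to supply the weight bookkeeping. Since $(A,m,1,r)$ is a quasitriangular $\epsilon$-unitary bialgebra of weight $\lambda$, the element $r$ is by Definition~\ref{def:qua} a solution of the AYBE of weight $\lambda$, so Theorem~\ref{thm:RB2} applies and tells us that $P_r$ is a Rota-Baxter operator of \emph{weight $-\lambda$}. Applying Lemma~\ref{lem:dend} to the Rota-Baxter algebra $(A,P_r)$ of weight $-\lambda$, the prescribed operations $a\succ b:=P_r(a)b$ and $a\prec b:=a\,P_r(b)+(-\lambda)ab=a\,P_r(b)-\lambda ab$ make $(A,\succ,\prec)$ a dendriform algebra. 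Matching these against the rewritten expressions above completes the argument.

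There is essentially no analytic obstacle here; the only point requiring care is the \emph{sign of the weight}. One must track that the AYBE of weight $\lambda$ yields a Rota-Baxter operator of weight $-\lambda$, so that Lemma~\ref{lem:dend} (stated for an operator of weight $\lambda$) is applied with its weight parameter set to $-\lambda$, which is precisely what turns the Lemma's $+\lambda ab$ term into the $-\lambda ab$ appearing in the definition of $\prec$. Once this is noted, the theorem is a direct corollary of Theorem~\ref{thm:RB2} and Lemma~\ref{lem:dend}, and no direct verification of the three dendriform axioms is needed.
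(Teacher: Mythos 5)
Your proposal is correct and coincides with the paper's own proof, which likewise deduces the theorem directly from Theorem~\ref{thm:RB2} and Lemma~\ref{lem:dend}; your explicit rewriting $a\succ b=P_r(a)b$ and $a\prec b=aP_r(b)-\lambda ab$, together with the observation that the weight $-\lambda$ of $P_r$ converts the Lemma's $+\lambda ab$ term into the $-\lambda ab$ in the definition of $\prec$, simply makes the paper's one-line argument fully explicit.
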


\begin{proof}
The result follows from Theorem~\mref{thm:RB2} and Lemma~\mref{lem:dend}.
\end{proof}

\smallskip

\noindent {\bf Acknowledgments}:
This work was supported by the National Natural Science Foundation
of China (Grant No.\@ 11771191 and 11501267), Fundamental Research Funds for the Central
Universities (Grant No.\@ lzujbky-2017-162), the Natural Science Foundation of Gansu Province (Grant
No.\@ 17JR5RA175).

\medskip

\noindent
\medskip

\end{document}